\documentclass[11pt]{article}
\usepackage[T1]{fontenc}
\usepackage[margin=1in]{geometry}
\usepackage{amsmath,amssymb,amsthm,mathtools}
\usepackage{enumitem}
\usepackage{microtype}
\usepackage{xcolor}
\usepackage[
  colorlinks=true,
  linkcolor=blue!45!black,
  citecolor=blue!45!black,
  urlcolor=blue!55!black
]{hyperref}

\newtheorem{theorem}{Theorem}
\newtheorem{lemma}[theorem]{Lemma}

\newtheorem{remark}[theorem]{Remark}

\newtheorem{question}[theorem]{Question}

\newcommand{\R}{\mathbb R}
\newcommand{\la}{\langle}
\newcommand{\ra}{\rangle}
\newcommand{\norm}[1]{\left\lVert #1\right\rVert}
\newcommand{\abs}[1]{\left\lvert #1\right\rvert}
\newcommand{\dd}{\,\mathrm d}

\title{Local Well-Posedness for Vlasov--Poisson\\
with $L^{d+}$ Initial Density and Fractional Velocity Regularity}
\author{
  {\bf Quoc-Hung Nguyen\thanks{
      E-mail address: qhnguyen@amss.ac.cn.
      State Key Laboratory of Mathematical Sciences,
      Academy of Mathematics \& Systems Science,
      The Chinese Academy of Sciences, Beijing 100190, China,
      and School of Mathematical Sciences,
      University of Chinese Academy of Sciences,
      Beijing 100049, China.}}
}
\date{}
\hypersetup{
  pdftitle={Local Well-Posedness for Vlasov--Poisson with Ld-plus Initial Density and Fractional Velocity Regularity},
  pdfauthor={Quoc-Hung Nguyen},
  pdfsubject={Local well-posedness for the Vlasov--Poisson system},
  pdfkeywords={Vlasov--Poisson, fractional velocity regularity, mixing estimate, Schauder fixed point}
}
\begin{document}
\maketitle

\begin{abstract}
We prove a local well-posedness criterion for the Vlasov--Poisson system on
$\R^d_x\times\R^d_v$, $d\geq2$, under an anisotropic assumption on the initial
distribution.  The datum has finite mass, its weighted velocity supremum
belongs to $L^p_x$ for some $p>d$, and it has an arbitrarily small positive
H\"older regularity in the velocity variable, uniformly with respect to
velocity and with the same spatial $L^p$ control.  The main estimate is a
nonlinear mixing bound
\[
 [\rho(t)]_{C^\alpha_x}\lesssim
 t^{-d/p-\epsilon}C(f_0),
 \qquad \epsilon>0~~\text{small}.
\]
Thus the density is integrable in time with values in a positive spatial
H\"older class, and the corresponding electric field belongs to
$L^1_tC^{1,\alpha}_x$.  We construct a solution by a
Schauder fixed point on the density and prove uniqueness by a
Loeper-type stability estimate.
\end{abstract}

\medskip
\noindent\textbf{2020 Mathematics Subject Classification.}
35Q49, 35Q83, 35Q85, 35A01.

\smallskip
\noindent\textbf{Keywords.}
Vlasov--Poisson equation; local well-posedness; fractional velocity
regularity; kinetic mixing; Schauder fixed
point.

\section{Introduction}

\subsection{Background and motivation}
We consider, in dimension $d\geq 2$, the Vlasov--Poisson system
\begin{equation}\label{VP}
 \partial_t f+v\cdot\nabla_x f+E\cdot\nabla_v f=0,
 \qquad E=\sigma\nabla_x\Delta_x^{-1}\rho,
 \qquad \rho(t,x)=\int_{\R^d}f(t,x,v)\dd v,
\end{equation}
where $\sigma\in\{-1,1\}$.  The sign plays no role in the local theory.
The Vlasov--Poisson equation is a basic kinetic model for collisionless
plasmas and self-gravitating matter.  Its transport structure suggests solving
the equation along the characteristics generated by the phase-space vector
field $(v,E)$.  At low regularity, however, this description is useful only
after one has shown that the spatial density $\rho$ is regular enough to make
the characteristic flow well defined and stable.  This is the central
difficulty: $f$ lives on phase space, while the field is generated by the
velocity average $\rho=\int f\,\dd v$.

\subsection{Related literature and contribution}

General introductions to kinetic equations and the Vlasov--Poisson system can
be found in \cite{BGP,Glassey,Rein07}.  The classical Cauchy theory was
developed by Horst \cite{Horst,Horst2}.  Weak solutions and moment propagation
are treated in \cite{Ars,CPallard}, while the transport and Lagrangian
structures for rough solutions are developed in
\cite{DiPernaLions,Lagrangian}.  Local theories in weighted Sobolev,
Bessel-potential, and Besov spaces appear in \cite{WFS1,WFS2,Besov}.

Global existence should be distinguished from global well-posedness
in the present low-regularity class.  In dimension $d=2$, global classical
solutions were
established by Ukai and Okabe \cite{UkaiOkabe}; in dimension $d=3$,
corresponding results were obtained by Pfaffelmoser, Lions--Perthame, and
Schaeffer \cite{Pfaffelmoser,LionsPerthame,Schaeffer}.  These results require
substantially stronger regularity, decay, or velocity-moment assumptions.
For nonsmooth vector fields, the theory of maximal regular
flows was developed by Ambrosio, Colombo, and Figalli
\cite{MaximalFlows}; its application to the Vlasov--Poisson system gives a
global Lagrangian existence theory for rough finite-energy data in the
repulsive case in dimensions $d\geq3$ \cite{Lagrangian}.  In particular, in
the three-dimensional repulsive case,
if finite kinetic energy were added to \eqref{data-assumption}, then the
initial field energy would also be finite: indeed,
$\rho_0\in L^1_x\cap L^p_x$ with $p>3$ implies
$\rho_0\in L^{6/5}_x$ and hence
$\nabla\Delta^{-1}\rho_0\in L^2_x$.  The resulting finite-energy theory
provides global Lagrangian existence, but it does not propagate the
$L^1_{\rm loc}C^1_x$ field regularity or yield global uniqueness in the
anisotropic class considered here.  In dimension $d=2$, a nonzero total
charge produces a Coulomb field which is not in $L^2_x$ at infinity, so an
analogous energy argument requires a neutralized or renormalized
formulation \cite[Remark~2.10]{Lagrangian}.  We therefore restrict the
present paper to local
well-posedness; global well-posedness under the assumptions of
Theorem~\ref{thm:main} remains outside the scope of the present argument.

The weighted Sobolev, Bessel-potential, and Besov theories
cited above impose regularity on the datum in the full phase space.\
Jeong and Tae
\cite{JT} were able to pass below that regime  in dimensions $d\geq3$.  For compactly
velocity-supported data they proved local well-posedness in
\begin{equation*}
 H^s(\R^d_x\times\R^d_v),\qquad s>\frac d2-\frac{1}{4},
\end{equation*}
so the distribution need not be bounded.  Tae \cite{Tae} subsequently
developed an $H^{s,p}$ theory and constructed the solution through a
density fixed-point argument.

Those arguments  use  Sobolev regularity of  $f_0$ in the full phase space,
 together with the velocity-averaging  estimate of
 Golse--Lions--Perthame--Sentis  \cite{GLPS}.  Our assumptions are anisotropic:
they impose a weighted H\"older modulus only in the velocity variable, and
the size of this modulus is controlled in  $L^p_x$ .  In particular, no
positive-order  spatial derivative of $f_0$ is  required.  Hence the preceding
Sobolev theories do not cover the present data class.  This distinction  is
the main motivation for the present work.  We also use a density fixed point,
with compactness supplied by the
Aubin--Lions--Simon theorem \cite{Simon}, but replace the Sobolev averaging
estimate by a direct physical-space estimate based on characteristics,
ballistic mixing, and a weighted fractional modulus in velocity.

For uniqueness, the relevant reference point is Loeper's stability theorem
under a bounded spatial density \cite{Loeper}.  Extensions to certain
unbounded densities, Orlicz classes, and localized Yudovich-type conditions
are given in \cite{Uniqueness,HoldingMiot,Yudovich}.
These results do not establish uniqueness under the sole
assumption that the density belongs to $L^p_x$ for one fixed
$p<\infty$.  Instead, they require control of finite $L^q_x$ norms for
arbitrarily large $q$, with prescribed growth as $q\to\infty$, or
corresponding Orlicz and localized Yudovich-type conditions.\
Kinetic Wasserstein
distances adapted to the position--velocity geometry and refinements of
Loeper's stability estimates were developed in
\cite{Iacobelli,IacobelliJunne}.
The refinement in \cite{IacobelliJunne} covers kinetic
Wasserstein orders $1<p<\infty$; this exponent indexes the transport distance
and should not be confused with control of the density in one fixed
$L^p_x$ space.\  In the present setting the density need not
be uniformly bounded up to $t=0$; instead,  $\norm{\rho(t)}_{L^\infty}$ has an
\textit{integrable time singularity}.  This permits a pointwise-in-time use of
Loeper's estimate.

Our main result treats an anisotropic class in which phase-space Sobolev
regularity is replaced by a weighted, uniform $C^\theta_v$ modulus whose
spatial size belongs to $L^p_x$.  For every $p>d$ and every
 $0<\theta\leq1$, we
obtain local existence, uniqueness, and continuous dependence.  Polynomial
velocity decay is sufficient, and no full velocity derivative is assumed.
The mechanism can already be seen for free transport.  In that case
\begin{equation}\label{free-density-intro}
 \rho^{\mathrm{free}}(t,x)
 =\int_{\R^d}f_0(x-tv,v)\dd v
 =t^{-d}\int_{\R^d}f_0\left(y,\frac{x-y}{t}\right)\dd y.
\end{equation}
 Thus a  $C^\theta_v$ modulus of $f_0$
produces a $C^\alpha_x$ modulus of $\rho(t)$ for every
 $0<\alpha<\theta$.  The nonlinear proof shows that this ballistic mixing
survives the perturbation by a sufficiently small, time-integrated electric
field.

\subsection{Main result}

For $0<\theta\leq1$ and $m>0$,  set
$\la z\ra=(1+\abs z^2)^{1/2}$ and define  the weighted velocity envelopes
 \begin{equation}\label{G-definition}
 G_0(x)=\operatorname*{ess\,sup}_{v\in\R^d}
 \la v\ra^m\abs{f_0(x,v)},
 \qquad
 G_\theta(x)=
 \operatorname*{ess\,sup}_{\substack{v\in\R^d\\0<\abs h\leq1}}
 \la v\ra^m
 \frac{\abs{f_0(x,v+h)-f_0(x,v)}}{\abs h^\theta}.
\end{equation} 
Thus $G_\theta$ is a weighted, uniform $C^\theta$ seminorm in velocity,
whose dependence on $x$ is measured in $L^p_x$.  This is an anisotropic
H\"older assumption, not a phase-space Sobolev assumption.
For almost every $x$, the bounds in \eqref{G-definition} determine a unique
$C^\theta_{\rm loc}$ representative of the section $v\mapsto f_0(x,v)$.
We use this representative throughout (and set it equal to zero on the
exceptional set of $x$'s).  In particular, expressions such as
$f_0(y,\mathcal W_t(x,y))$ below are unambiguous.

We use the following solution concept.  A \emph{Lagrangian solution} on
$[0,T]$ is a pair $(f,E)$ for which  $E\in L^1(0,T;C^1)$, the backward
characteristic system
\begin{equation}\label{characteristics-intro}
 \partial_sX_{s,t}=V_{s,t},\qquad
 \partial_sV_{s,t}=E(s,X_{s,t}),\qquad
 (X_{t,t},V_{t,t})=(x,v)
\end{equation}
has a unique flow, and
\begin{equation*}
 f(t,x,v)=f_0\bigl(X_{0,t}(x,v),V_{0,t}(x,v)\bigr),
 \qquad E=\sigma\nabla\Delta^{-1}\int f\,\dd v.
\end{equation*}
Such a solution also solves
\eqref{VP} in the sense of distributions.

Throughout the paper, $[\,\cdot\,]_{C^\alpha}$
denotes the homogeneous H\"older seminorm.  The letter $C$ denotes a positive
constant that may change from line to line and depends only on the fixed
parameters of the theorem.

\begin{theorem}[Local well-posedness with fractional velocity regularity]\label{thm:main}
Let $d\geq2$, let $p>d$, let $m>d+1$, fix $0<\theta\leq1$, and let
$f_0\geq0$ satisfy
\begin{equation}\label{data-assumption}
 f_0\in L^1(\R^{2d}),\qquad
 G_0,G_\theta\in L^p(\R^d),
\end{equation}
where $G_0$ and $G_\theta$ are defined in \eqref{G-definition}.
Choose
\begin{equation}\label{alpha-r-choice}
 0<\alpha<\min\left\{\theta,1-\frac dp\right\},
 \qquad
 1<r<\frac{1}{d/p+\alpha}.
\end{equation}
Then there exists $T\in(0,1]$, depending only on
$d,p,m,\theta,\alpha,r$ and
\begin{equation*}
 A_0:=\norm{f_0}_{L^1_{x,v}}+\norm{G_0}_{L^p_x}+\norm{G_\theta}_{L^p_x},
\end{equation*}
such that \eqref{VP} has a unique Lagrangian solution on $[0,T]$.  It satisfies
 \begin{align}
 &f\in C\big([0,T];L^1(\R^{2d})\cap L^p(\R^{2d})\big),\label{fclass}\\
 &\rho\in L^\infty\big(0,T;L^1(\R^d)\cap L^p(\R^d)\big)
           \cap L^r\big(0,T;C^\alpha(\R^d)\big),\label{rhoclass}\\
 &E\in L^r\big(0,T;C^{1,\alpha}(\R^d)\big).\label{Eclass}
\end{align} 
Define the current density by
$j(t,x):=\int_{\R^d}v f(t,x,v)\dd v$.\
For $0<t\leq T$,
\begin{align}
 \norm{\rho(t)}_{L^\infty_x}
 +\norm{j(t)}_{L^\infty_x}
 &\leq C t^{-d/p}\norm{G_0}_{L^p_x},\label{rhoinfty}\\
 [\rho(t)]_{C^\alpha_x}
 &\leq C t^{-d/p-\alpha}
       \big(\norm{G_0}_{L^p_x}+\norm{G_\theta}_{L^p_x}\big),\label{rhoholder}\\
 \norm{\rho(t)}_{L^p_x}+\norm{j(t)}_{L^p_x}
 &\leq C\norm{G_0}_{L^p_x}.\label{currentbound}
\end{align}
In particular, $\rho\in L^1(0,T;C^\alpha_b)$ and
$E\in L^1(0,T;C^{1,\alpha}_b)$.
Moreover, on sets of initial data for which $A_0$ is uniformly bounded, the
solution map is continuous from $L^1_{x,v}\cap L^p_{x,v}$ into
$C([0,T];L^1_{x,v}\cap L^p_{x,v})$.
\end{theorem}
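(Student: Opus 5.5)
The plan is a Schauder fixed point on the density, in the space
\[
 \mathcal X_T=\Bigl\{\rho:\ \sup_{0<t\le T}t^{d/p}\norm{\rho(t)}_{L^\infty}\le K_0,\ \sup_{0<t\le T}t^{d/p+\alpha}[\rho(t)]_{C^\alpha}\le K_1,\ \norm{\rho(t)}_{L^1\cap L^p}\le K_2\Bigr\},
\]
which is convex. Given $\rho\in\mathcal X_T$, set $E=\sigma\nabla\Delta^{-1}\rho$. The bounds $\norm{\nabla E(t)}_{L^\infty}+[\nabla E(t)]_{C^\alpha}\lesssim\norm{\rho(t)}_{L^1\cap L^\infty}+[\rho(t)]_{C^\alpha}$ hold because $\alpha>0$ and $\rho\in L^1$. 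Since $d/p+\alpha<1$, they give $E\in L^1(0,T;C^{1,\alpha}_b)$, and in fact $\int_0^T\norm{E}_{C^{1,\alpha}}\,dt\lesssim (K_0+K_1)T^{1-d/p-\alpha}$, which is small. The characteristics are then Lipschitz and measure preserving, and we define $f=f_0\circ(X_{0,t},V_{0,t})$ and $\Phi(\rho)=\int f\,dv$.

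The core of the argument is to prove \eqref{rhoinfty}--\eqref{currentbound} for $\Phi(\rho)$ with constants depending only on $A_0$, for $T$ small. Write $X_{0,t}(x,v)=x-tv+\delta X$ and $V_{0,t}=v+\delta V$. We have $|\delta V|\le\int_0^t\norm{E}_\infty$ and $|\delta X|\le t\int_0^t\norm{E}_\infty$, and the Jacobian $\partial_v X_{0,t}=-t(I+O(\int_0^t s\norm{\nabla E}_\infty ds))$, where the error term is small. Hence $v\mapsto y=X_{0,t}(x,v)$ is a diffeomorphism with Jacobian comparable to $t^{-d}$, and it has an inverse $v=\mathcal W_t(x,y)$. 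This gives the nonlinear analogue of \eqref{free-density-intro}:
\[
\Phi(\rho)(t,x)=\int f_0\bigl(y,\widetilde V_t(x,y)\bigr)\,J_t(x,y)\,dy,\qquad J_t\approx t^{-d}.
\]
The weight $\la v\ra^{-m}$ with $m>d+1$ makes $\int\la(x-y)/t\ra^{-m}t^{-d}dy$ an approximate identity. It also absorbs $\la v\ra$ for $j$, using $m>d+1$ and the boundedness of $\delta V$. Combining this with H\"older's inequality gives $\norm{\Phi(\rho)(t)}_\infty\lesssim t^{-d/p}\norm{G_0}_p$ and the $L^p$ bounds. The $L^p$ bound can alternatively be obtained directly, by estimating $\int\sup_v$ through the change of variables in $x$.

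For the H\"older bound, we difference in $x$ with $|x-x'|=h$. If $h\ge t$, the $L^\infty$ bound already gives $t^{-d/p}h^{\alpha}\cdot t^{-\alpha}$. If $h<t$, the difference splits into three pieces. The first uses the velocity modulus: $|\widetilde V_t(x,y)-\widetilde V_t(x',y)|\lesssim h/t$, which gives a contribution $G_\theta(y)(h/t)^\theta$ and, by the same convolution estimate, $t^{-d/p}(h/t)^\theta\le t^{-d/p-\alpha}h^\alpha$. The second is the variation of $J_t$, controlled by $\nabla_x$ of the flow, of size $h/t$. The third is the $C^\alpha$ dependence of $\delta X,\delta V$, where $\nabla^2 E$ enters only through $[\nabla E]_{C^\alpha}$ integrated in time and is therefore small.

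This step is the main obstacle. We must show that the derivatives of $\mathcal W_t$ in $x$ are of order $1/t$, with a constant close to that of free transport, uniformly in $s\in[0,t]$. This requires a Gronwall argument on the linearized flow with the weight $s\norm{\nabla E(s)}_\infty$, together with the $C^\alpha$ control of $\nabla E$ for the Jacobian difference. With $K_0=C\norm{G_0}_p$, $K_1=C(\norm{G_0}_p+\norm{G_\theta}_p)$ fixed first and $T$ then chosen small, $\Phi$ maps $\mathcal X_T$ into itself. Mass conservation gives the $L^1$ bound.

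Compactness and continuity come from two facts. The images are bounded in $L^r(0,T;C^\alpha)$ by the choice of $r$, and they are locally in $C^{\alpha'}$ for $\alpha'<\alpha$. Moreover $\partial_t\Phi(\rho)=-\nabla\cdot j$ is bounded in $L^\infty(0,T;W^{-1,p})$. Aubin--Lions--Simon therefore gives relative compactness in $L^r(0,T;C^{\alpha'}_{loc})$, and the tails are controlled by the $L^1$ bound and velocity decay. Continuity of $\Phi$ follows from the stability of the flows in $L^1_tC^1$ and dominated convergence. Schauder's theorem (with $\alpha$ replaced by a slightly larger admissible exponent to allow for the loss $\alpha'<\alpha$) yields a fixed point, and \eqref{fclass}--\eqref{Eclass} follow.

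Uniqueness and continuous dependence follow Loeper's argument. For two solutions, define $Q(t)=\int f_0\bigl(|X^1-X^2|^2+|V^1-V^2|^2\bigr)$ over the initial data. Loeper's estimate gives $\norm{E_1-E_2}_{L^2}\lesssim\max_i\norm{\rho_i(t)}_\infty^{1/2}W_2$, so
\[
\dot Q\lesssim\bigl(1+\norm{\nabla E_1(t)}_\infty+\max_i\norm{\rho_i(t)}_\infty\bigr)Q .
\]
The coefficient is integrable because $t^{-d/p}\in L^1$. Gronwall then gives $Q\equiv0$ for equal data, and stability in $W_2$ otherwise. The continuity of the data-to-solution map into $C([0,T];L^1\cap L^p)$ comes from this flow stability, combined with the uniform bounds and approximation of $f_0$ by continuous functions.
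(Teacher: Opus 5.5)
Your construction is, in all essential respects, the paper's. You invert $v\mapsto X_{0,t}(x,v)$ under a field that is small in $L^1_tC^{1,\alpha}$. You represent the density as $\int f_0(y,\widetilde V_t(x,y))J_t(x,y)\,\dd y$ with $J_t\approx t^{-d}$, and bound it by convolution with $t^{-d}\la z/t\ra^{-m}$. The $C^\alpha$ bound comes from an $O(h/t)$ bound on the initial-velocity increment together with H\"older control of the Jacobian. Existence is by Schauder, with Aubin--Lions--Simon compactness fed by the continuity equation, and uniqueness is by Loeper's estimate with the integrable coefficient $t^{-d/p}$.

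Your convex set, defined by $\sup_t t^{d/p}\norm{\rho(t)}_{L^\infty}$, $\sup_t t^{d/p+\alpha}[\rho(t)]_{C^\alpha}$ and $L^1\cap L^p$ bounds, is a legitimate simplification of the paper's $\mathcal K$. It contains $0$ and is closed under a.e.-in-time convergence. The time-derivative bound, trace and tightness are needed only for the image, so you skip the paper's check that these survive closure. For the same reason no larger exponent is needed: the $C^\alpha$ bound is already a closed condition.

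Three points in your sketch need correcting.
\begin{enumerate}
\item With $E$ only $C^{1,\alpha}$, $J_t$ does not vary by $O(h/t)$. It is $C^\alpha$ in $x$ with constant $\int_0^t\norm{E}_{C^{1,\alpha}}$, as in \eqref{Jholder}, which is all you need.
\item Run dominated convergence for continuity of $\Phi$ on the representation formula, not on $\int f_0(X^n_{0,t},V^n_{0,t})\,\dd v$, because $f_0$ has no continuity in $x$. There $f_0(y,\cdot)$ is continuous for a.e.\ $y$ and the integrand is dominated by $Ct^{-d}G_0(y)\la(x-y)/t\ra^{-m}$. The paper instead identifies limits via distributional convergence plus precompactness.
\item Tightness of the image comes from $f_0\in L^1(\R^{2d})$ and the bounded displacement of characteristics, not from an $L^1$ bound. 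Turning $L^1$ tails into $C^{\alpha'}$ tails needs interpolation with the $C^\alpha$ bound.
\end{enumerate}

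The step that fails as written is continuous dependence. When $f_{0,n}\neq f_0$, the measure $(X^n_t,X_t)_\#(f_0\,\dd y\,\dd w)$ is not a coupling of $\rho_n(t)$ and $\rho(t)$. The masses may differ, and no second moments are assumed. So there is no ``stability in $W_2$'' to invoke, and $W_2$ closeness would not give $L^1\cap L^p$ convergence anyway.

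To run your route, split $\rho_n-\rho=\eta_n+(\widetilde\rho_n-\rho)$, where $\widetilde\rho_n=\int f_0\circ\Phi^n_{0,t}\,\dd v$ and $\eta_n=\int(f_{0,n}-f_0)\circ\Phi^n_{0,t}\,\dd v$. Since $\norm{\eta_n(t)}_{L^1}\le\norm{f_{0,n}-f_0}_{L^1}$ and $\norm{\eta_n(t)}_{L^\infty}\lesssim t^{-d/p}$, the field of $\eta_n$ tends to zero in $L^1_tL^\infty_x$. Loeper's estimate then handles the equal-mass part through the $f_0$-weighted flow functional. You would still need to convert flow convergence into local convergence of the backward maps.

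The paper avoids this bookkeeping. The uniform bounds make $\rho_n$ precompact, and any limit generates a Lagrangian solution with datum $f_0$, which uniqueness identifies. Approximating $f_0$ by $C_c^\infty$ functions then gives convergence in $C([0,T];L^1\cap L^p)$.
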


\begin{remark}[Interpretation of the assumptions]\label{rem:assumptions}
The condition $G_0\in L^p_x$ already implies $f_0\in L^p_{x,v}$, since
\begin{equation*}
 \norm{f_0}_{L^p_{x,v}}^p
 \leq \norm{G_0}_{L^p_x}^p
      \int_{\R^d}\la v\ra^{-mp}\dd v<\infty.
\end{equation*}
The stronger restriction $m>d+1$ is used to control the current
$j=\int vf\,\dd v$. 
\end{remark}

\subsection{Main ideas of the proof}

We briefly explain the argument and the origin of the restrictions in
Theorem~\ref{thm:main}.  The proof is based on the characteristic method.  In
particular, in backward variables we study the map from terminal velocity to
initial position, invert this map for short times, and express the density
through the resulting characteristic coordinates.  This physical-space
viewpoint, as well as the use of sharp pointwise characteristic estimates,
follows ideas developed by Huang, the author, and Xu\ for the screened Vlasov--Poisson system and for the
two-dimensional Vlasov--Poisson system with massless electrons
\cite{HNXscreened,HNX2d}.  The perturbative problems treated there are
different from the present local theory, but the characteristic
reparametrization is closely related.

\paragraph{1. A prescribed density determines a characteristic flow.}
Starting from a density $\varrho$, set
$E_\varrho=\sigma\nabla\Delta^{-1}\varrho$ and let
$(X_{s,t}^\varrho,V_{s,t}^\varrho)$ be its backward characteristics.
Transporting the fixed datum $f_0$ by this flow
produces a new density
\begin{equation*}
 \mathcal G[\varrho](t,x)
 =\int_{\R^d}f_0\bigl(X_{0,t}^\varrho(x,v),
                         V_{0,t}^\varrho(x,v)\bigr)\dd v.
\end{equation*}
A fixed point $\mathcal G[\rho]=\rho$ is exactly a solution of
Vlasov--Poisson.

\paragraph{2. The nonlinear mixing formula.}
For a prescribed field with small $L^1_tC^{1,\alpha}_x$ norm, the map from terminal
velocity to initial position is invertible.  We denote by
$\mathcal V_t(x,y)$ the terminal velocity of the characteristic whose
initial position is $y$, and by $\mathcal W_t(x,y)$ its initial velocity.
Then
\begin{equation}\label{density-formula-intro}
 \rho_E(t,x)=t^{-d}\int_{\R^d}
 f_0\bigl(y,\mathcal W_t(x,y)\bigr)J_t(x,y)\dd y,
 \qquad \mathcal W_t(x,y)=\frac{x-y}{t}+O(1).
\end{equation}
This is the nonlinear counterpart of \eqref{free-density-intro}.

\paragraph{3. Fractional velocity regularity becomes spatial regularity.}
The increment $x\mapsto x+\ell$ changes $\mathcal W_t$ by
$O(\abs\ell/t)$.
The $G_\theta$ assumption and the H\"older continuity of the Jacobian
$J_t$ yield
\begin{equation}\label{key-estimate-intro}
 [\rho_E(t)]_{C^\alpha_x}
 \leq C t^{-d/p-\alpha}
 \bigl(\norm{G_0}_{L^p_x}+\norm{G_\theta}_{L^p_x}\bigr).
\end{equation}
This is the only step that uses velocity regularity.  Since
$d/p+\alpha<1$, the right-hand side is integrable in time.  Elliptic
H\"older estimates then give
$\nabla\Delta^{-1}\rho_E\in
L^1_tC^{1,\alpha}_x$, which closes the characteristic construction.

\paragraph{4. Compact fixed point and uniqueness.}
The map $\mathcal G$ is not shown to be a contraction, so the Banach fixed
point theorem is not used.  Instead, the density bounds, the continuity
equation, and spatial tightness produce a closed convex
set of admissible densities in a Banach space, while the density map has
relatively compact range.  The Schauder fixed-point theorem then yields a
fixed point, as in \cite[Section~4.1]{NguyenMemoir}; see also the density
fixed-point construction in \cite{Tae}.\ 
Finally, the bounds
$\norm{\rho(t)}_{L^\infty}\lesssim t^{-d/p}$ and
$E\in L^1_tC^1_x$, together with\ Loeper's field stability
estimate, give uniqueness by Gronwall's inequality.

\subsection{Organization of the paper}

Section~\ref{sec:elliptic-flow} develops the short-time
characteristic reparametrization.  In particular, we prove that the map from
terminal velocity to initial position is a global diffeomorphism of $\R^d$
for each sufficiently short time, construct its inverse, and estimate the
associated Jacobian.  Section~\ref{sec:mixing}
uses this change of variables to derive exact representation formulas for
the density and current and to establish the required $L^p$, $L^\infty$,
and H\"older bounds.  In Section~\ref{sec:fixed-point}, these estimates,
together with compactness and continuity, are used to construct a solution
through a Schauder fixed point on the density.
Section~\ref{sec:uniqueness} combines a Loeper-type stability estimate with
the integrable bound on $\norm{\rho(t)}_{L^\infty}$ to prove uniqueness.
Section~\ref{sec:continuity} establishes time continuity of the solution and
continuous dependence on the initial datum.  Finally,
Section~\ref{sec:endpoints} compares the result with the classical
bounded-density theory, discusses the endpoints $p=d$ and $\theta=0$, and
formulates a related nonuniqueness problem.\\\\
{\bf Acknowledgements.} The research of Quoc-Hung Nguyen was supported by the 
CAS Project for Young Scientists in Basic Research, Grant No.~YSBR-031; and the NSFC under Grant Nos.~1251101538 and 12595282. 
\section{Characteristic estimates}\label{sec:elliptic-flow}

We first establish the short-time characteristic reparametrization that
turns velocity integration into a dispersive spatial integral.

\subsection{Characteristic reparametrization}

The purpose of this subsection is to construct the inverse
characteristic reparametrization and to quantify its deviation from the
free-transport change of variables.

Fix $0<\alpha<1$ and a field
\begin{equation}\label{field-assumption}
 E\in L^1(0,T;C^{1,\alpha}(\R^d)).
\end{equation}
Following the notation of \cite{HNXscreened,HNX2d}, let
$(X_{s,t}(x,v),V_{s,t}(x,v))$ be the backward characteristics:
\begin{equation}\label{backwardflow}
 \begin{cases}
  \dfrac{\dd}{\dd s}X_{s,t}(x,v)=V_{s,t}(x,v),
  &X_{t,t}(x,v)=x,\\[2mm]
  \dfrac{\dd}{\dd s}V_{s,t}(x,v)=E(s,X_{s,t}(x,v)),
  &V_{t,t}(x,v)=v.
 \end{cases}
\end{equation}
Thus
\begin{align}
 X_{s,t}(x,v)
 &=x-(t-s)v+\int_s^t(\tau-s)
     E(\tau,X_{\tau,t}(x,v))\dd\tau,\label{backward-X}\\
 V_{s,t}(x,v)
 &=v-\int_s^tE(\tau,X_{\tau,t}(x,v))\dd\tau.\label{backward-V}
\end{align}
The phase-space map $(x,v)\mapsto(X_{s,t},V_{s,t})$ preserves Lebesgue
measure.

\begin{lemma}[Characteristic reparametrization]\label{lem:twist}
Let $0<T\leq1$.  There exists $\delta_0=\delta_0(d)>0$ such that, if
\begin{equation}\label{small-field}
 \int_0^T\norm{E(s)}_{C^{1,\alpha}}\dd s\leq\delta_0,
\end{equation}
then for every $0\leq s<t\leq T$, there is a global $C^1$ map
$\Psi_{s,t}:\R^d_x\times\R^d_v\to\R^d_v$ satisfying
\begin{equation}\label{Psi-definition}
 X_{s,t}\bigl(x,\Psi_{s,t}(x,v)\bigr)=x-(t-s)v.
\end{equation}
For $t>0$ and $x,y\in\R^d$, set
\begin{equation}\label{VWJ-definition}
 \mathcal V_t(x,y)
 :=\Psi_{0,t}\left(x,\frac{x-y}{t}\right),\qquad
 \mathcal W_t(x,y)
 :=V_{0,t}\bigl(x,\mathcal V_t(x,y)\bigr),
\end{equation}
and
\begin{equation}\label{J-definition}
 J_t(x,y)
 :=\left|\det D_v\Psi_{0,t}
       \left(x,\frac{x-y}{t}\right)\right|.
\end{equation}
Then
\begin{equation}\label{endpoint-identities}
 X_{0,t}\bigl(x,\mathcal V_t(x,y)\bigr)=y,\qquad
 V_{0,t}\bigl(x,\mathcal V_t(x,y)\bigr)=\mathcal W_t(x,y),
\end{equation}
the map $v\mapsto X_{0,t}(x,v)$ is a global $C^1$ diffeomorphism, and
 \begin{align}
 \left|\mathcal V_t(x,y)-\frac{x-y}{t}\right|
 +\left|\mathcal W_t(x,y)-\frac{x-y}{t}\right|
 &\leq C\int_0^t\norm{E(\tau)}_{L^\infty}\dd\tau,\label{VWfree}\\
 C^{-1}\leq J_t(x,y)&\leq C,\label{twistbounds}\\
 \left|\mathcal W_t(x,y)-\mathcal W_t(x',y)\right|
 &\leq C\frac{|x-x'|}{t},\label{W-Lipschitz}\\
 |J_t(x,y)-J_t(x',y)|
 &\leq C|x-x'|^\alpha
       \int_0^t\norm{E(\tau)}_{C^{1,\alpha}}\dd\tau.\label{Jholder}
\end{align} 
All constants are uniform under \eqref{small-field}.
\end{lemma}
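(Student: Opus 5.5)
The plan is to reduce everything to a fixed-point problem for the velocity, with the smallness \eqref{small-field} making the nonlinear part of the backward position a small Lipschitz perturbation of the free part $x-(t-s)v$.

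\textbf{Step 1: derivative bounds for the flow.} Differentiate \eqref{backwardflow} in $v$ and $x$. This gives linear ODEs driven by $\nabla E(\tau,X_{\tau,t})$. A Gronwall argument, run backward from $\tau=t$ and using $\int_0^T\norm{\nabla E}_{L^\infty}\leq\delta_0$ and $T\leq1$, yields for $s\leq\tau\leq t$:
\begin{align*}
 &\abs{D_vX_{\tau,t}+(t-\tau)I}\leq C\delta_0(t-\tau),\qquad \abs{D_vV_{\tau,t}-I}\leq C\delta_0,\\
 &\abs{D_xX_{\tau,t}-I}\leq C\delta_0,\qquad \abs{D_xV_{\tau,t}}\leq C\delta_0 .
\end{align*}

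\textbf{Step 2: construct $\Psi_{s,t}$.} By \eqref{backward-X} we can write $X_{s,t}(x,v)=x-(t-s)v+R_{s,t}(x,v)$, where
\[
R_{s,t}(x,v)=\int_s^t(\tau-s)E(\tau,X_{\tau,t}(x,v))\dd\tau .
\]
Then \eqref{Psi-definition} is equivalent to the fixed-point equation $\Psi=w+R_{s,t}(x,\Psi)/(t-s)$ with $w=v$. Step 1 gives
\[
\abs{D_vR_{s,t}}\leq C\int_s^t(\tau-s)(t-\tau)\norm{\nabla E(\tau)}_{L^\infty}\dd\tau\leq C\delta_0(t-s)^2 .
\]
Hence $v\mapsto R_{s,t}(x,v)/(t-s)$ is $C\delta_0$-Lipschitz, and for $\delta_0$ small it is a contraction on all of $\R^d$. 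Banach's theorem gives a unique global solution $\Psi_{s,t}(x,w)$.

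The implicit function theorem gives $C^1$ regularity, with
\[
D_w\Psi=\bigl(I-D_vR/(t-s)\bigr)^{-1}=I+O(\delta_0),
\]
which yields \eqref{twistbounds}. Uniqueness of the fixed point shows that $v\mapsto X_{0,t}(x,v)$ is a bijection whose inverse is $y\mapsto\Psi_{0,t}(x,(x-y)/t)$. Its differential is invertible, so it is a global $C^1$ diffeomorphism. The identities \eqref{endpoint-identities} then follow from the definitions.

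\textbf{Step 3: the bounds \eqref{VWfree} and \eqref{W-Lipschitz}.} For \eqref{VWfree}, note
\[
\mathcal V_t-\frac{x-y}{t}=\frac{R_{0,t}}{t},\qquad \abs{R_{0,t}}\leq t\int_0^t\norm{E}_{L^\infty}.
\]
In addition, $\mathcal W_t-\mathcal V_t=-\int_0^tE$ by \eqref{backward-V}.

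For \eqref{W-Lipschitz}, differentiate the identity $x-t\mathcal V+R_{0,t}(x,\mathcal V)=y$ in $x$:
\[
\bigl(tI-D_vR\bigr)D_x\mathcal V=I+D_xR .
\]
This gives $\abs{D_x\mathcal V}\leq C/t$. The chain rule with Step 1 then gives
\[
\abs{D_x\mathcal W}\leq\abs{D_xV_{0,t}}+\abs{D_vV_{0,t}}\,\abs{D_x\mathcal V}\leq C/t .
\]

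\textbf{Step 4: the Hölder bound \eqref{Jholder} (main obstacle).} We have $J_t=\abs{\det(I-D_vR_{0,t}(x,\mathcal V)/t)^{-1}}$, and $\det$ and inversion are smooth near $I$. It therefore suffices to show that
\[
x\mapsto t^{-1}D_vR_{0,t}\bigl(x,\mathcal V_t(x,y)\bigr)
\]
is $\alpha$-Hölder with constant $C\int_0^t\norm{E}_{C^{1,\alpha}}$.

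If $\abs{\ell}\geq1$, the bound $\abs{D_vR}\leq C t^2\int\norm{\nabla E}$ already suffices. So take $\abs\ell<1$ and compare the points $(x,\mathcal V_t(x,y))$ and $(x+\ell,\mathcal V_t(x+\ell,y))$. By Step 3 the velocities differ by at most $C\abs\ell/t$. By Step 1 the positions then satisfy
\[
\abs{\Delta X_{\tau,t}}\leq C\abs\ell+C(t-\tau)\abs\ell/t\leq C\abs\ell .
\]
Hence $\nabla E(\tau,X_{\tau,t})$ varies by at most $\norm{E(\tau)}_{C^{1,\alpha}}\abs\ell^\alpha$.

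Next, $D_vX_{\tau,t}$ solves a linear ODE whose coefficient is this quantity. Applying Gronwall to the difference of the two solutions gives
\[
\abs{\Delta D_vX_{\tau,t}}\leq C(t-\tau)\abs\ell^\alpha\int_0^t\norm{E}_{C^{1,\alpha}} .
\]
Inserting both differences into
\[
D_vR_{0,t}=\int_0^t\tau\,\nabla E(\tau,X_{\tau,t})\,D_vX_{\tau,t}\dd\tau
\]
and using $\tau(t-\tau)\leq t^2$ gives a bound of the form $Ct\,\abs\ell^\alpha\int_0^t\norm{E}_{C^{1,\alpha}}$. Dividing by $t$ yields \eqref{Jholder}.

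The delicate points are that the $1/t$ loss in the velocity increment must be exactly compensated by the factor $(t-\tau)$ in the position sensitivity, and that all Gronwall constants must stay uniform under \eqref{small-field}. I would check these carefully first.
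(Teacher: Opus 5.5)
Your proof is correct. It differs from the paper's in the construction of $\Psi_{s,t}$ and in the proof of \eqref{W-Lipschitz}, while your argument for \eqref{Jholder} is essentially the paper's.

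\textbf{Construction of $\Psi_{s,t}$.} The paper shows $\|-(t-s)^{-1}D_vX_{s,t}-I\|\leq 1/2$, observes that $v\mapsto X_{s,t}(x,v)$ is proper, and then invokes the global inverse function theorem. You instead rewrite \eqref{Psi-definition} as the fixed-point problem $\Psi=w+R_{s,t}(x,\Psi)/(t-s)$. Since $v\mapsto R_{s,t}(x,v)/(t-s)$ is $C\delta_0$-Lipschitz, Banach's theorem gives existence and uniqueness, hence global bijectivity, in one step, and the implicit function theorem supplies $C^1$ regularity. This is more elementary and avoids properness and Hadamard-type arguments.

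\textbf{Proof of \eqref{W-Lipschitz}.} The paper works with finite differences. It treats the two trajectories with common initial point $y$ and terminal points $x,x'$ as a two-point boundary value problem and solves it with the Dirichlet Green kernel \eqref{Green-kernel}. This gives the sharper separation bound $|D_s|\leq C(s/t)|\ell|$ and $|\dot D_0|\leq 2|\ell|/t$, and it never estimates $x$-derivatives of the flow. You work infinitesimally instead. You need the extra linearized bounds on $D_xX_{\tau,t}$ and $D_xV_{\tau,t}$. Then implicit differentiation, $(tI-D_vR_{0,t})D_x\mathcal V_t=I+D_xR_{0,t}$, together with the chain rule gives $|D_x\mathcal W_t|\leq C/t$.

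\textbf{Proof of \eqref{Jholder}.} Both arguments take the difference of the linear equation \eqref{A-equation} along the two trajectories $\Gamma,\Gamma'$, absorb or apply Gronwall, and use Lipschitz dependence of the determinant near $I$. The only difference is where the position bound comes from. Yours, $|\Delta X_{\tau,t}|\leq C|\ell|$, comes from the $|\ell|/t$ velocity increment being cancelled by the factor $(t-\tau)$ in $D_vX_{\tau,t}$. The paper's comes from the Green-kernel estimate. Your separate case $|\ell|\geq 1$ is unnecessary, since $[\cdot]_{C^\alpha}$ is the homogeneous seminorm, but it does no harm.
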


\begin{proof}
Put
\begin{equation*}
 A_{s,t}(x,v):=D_vX_{s,t}(x,v).
\end{equation*}
Differentiating \eqref{backward-X} gives
\begin{equation}\label{A-equation}
 A_{s,t}=-(t-s)I+
 \int_s^t(\tau-s)\nabla E(\tau,X_{\tau,t})A_{\tau,t}\dd\tau.
\end{equation}
If
 $M=\sup_{0\leq s<t\leq T}(t-s)^{-1}\norm{A_{s,t}}_{L^\infty}$, then
\begin{equation*}
 M\leq1+M\sup_{s<t}\int_s^t
 \frac{(\tau-s)(t-\tau)}{t-s}\norm{\nabla E(\tau)}_{L^\infty}\dd\tau.
\end{equation*} 
The fraction in the integrand is bounded by $T$, so
\eqref{small-field} gives $M\leq2$ after reducing $\delta_0$.  Returning to
\eqref{A-equation},
 \begin{equation}\label{A-close}
 \norm{-\frac{A_{s,t}}{t-s}-I}_{L^\infty}
 \leq C T\int_s^t\norm{\nabla E(\tau)}_{L^\infty}\dd\tau
 \leq\frac{1}{2}.
\end{equation} 
Consequently $v\mapsto X_{s,t}(x,v)$ is locally invertible.  It is proper,
because \eqref{backward-X} gives
 \begin{equation*}
 X_{s,t}(x,v)=x-(t-s)v+
 O\left((t-s)\int_s^t\norm{E(\tau)}_{L^\infty}\dd\tau\right).
\end{equation*} 
The global inverse theorem therefore shows that it is a global
diffeomorphism.  Defining $\Psi_{s,t}$ by \eqref{Psi-definition} is now
legitimate.  Differentiating that identity in $v$ yields
\begin{equation}\label{Psi-Jacobian}
 A_{s,t}\bigl(x,\Psi_{s,t}(x,v)\bigr)
 D_v\Psi_{s,t}(x,v)=-(t-s)I.
\end{equation}
In particular, \eqref{A-close} and \eqref{Psi-Jacobian} prove
\eqref{twistbounds}.

Let $u=(x-y)/t$.  Equations \eqref{backward-X},
\eqref{Psi-definition}, and \eqref{backward-V} give
\begin{align*}
 \mathcal V_t(x,y)-u
 &=\frac{1}{t}\int_0^t\tau E(\tau,\Gamma_\tau)\dd\tau,\\
 \mathcal W_t(x,y)-\mathcal V_t(x,y)
 &=-\int_0^tE(\tau,\Gamma_\tau)\dd\tau,
\end{align*}
where
\begin{equation*}
 \Gamma_\tau
 :=X_{\tau,t}\bigl(x,\mathcal V_t(x,y)\bigr).
\end{equation*}
This proves \eqref{VWfree}.

We next compare two trajectories with the same initial point $y$ and terminal
positions $x,x'$.  Let $\Gamma,\Gamma'$ be the corresponding curves and put
$D_\tau=\Gamma_\tau-\Gamma'_\tau$ and $\ell=x-x'$.
Indeed, by \eqref{Psi-definition} and
\eqref{VWJ-definition},
\begin{align*}
 \Gamma_0
 &=X_{0,t}\bigl(x,\mathcal V_t(x,y)\bigr)
   =x-t\frac{x-y}{t}=y,\\
 \Gamma'_0
 &=X_{0,t}\bigl(x',\mathcal V_t(x',y)\bigr)
   =x'-t\frac{x'-y}{t}=y,
\end{align*}
whereas the terminal conditions for the backward characteristics give
$\Gamma_t=x$ and $\Gamma'_t=x'$.  Therefore

\begin{equation*}
 D_0=0,\qquad D_t=\ell,\qquad
 \ddot D_\tau=E(\tau,\Gamma_\tau)-E(\tau,\Gamma'_\tau).
\end{equation*}
Let $G_t$ be the Dirichlet Green kernel for $-\partial_s^2$ on $[0,t]$,
namely
\begin{equation}\label{Green-kernel}
 G_t(s,\tau)=
 \begin{cases}
  \dfrac{\tau(t-s)}{t},&0\leq\tau\leq s,\\[2mm]
  \dfrac{s(t-\tau)}{t},&s\leq\tau\leq t.
 \end{cases}
\end{equation}
Hence the exact identity is
\begin{equation}\label{boundary-Green}
 D_s=\frac{s}{t}\ell-
 \int_0^tG_t(s,\tau)
 \bigl(E(\tau,\Gamma_\tau)-E(\tau,\Gamma'_\tau)\bigr)\dd\tau,
\end{equation}
and the sign will not matter in the estimates.  For fixed $t$, define
\begin{equation*}
 N_t:=\sup_{0<s\leq t}\frac{t}{s}|D_s|.
\end{equation*}
 Since $D\in C^1([0,t])$ and $D_0=0$, the quantity $N_t$ is finite.\
Then $|D_\tau|\leq(\tau/t)N_t$.  Moreover, the explicit formula
\eqref{Green-kernel} gives, for $0<s\leq t$ and $0\leq\tau\leq t$,
\begin{equation*}
 \frac{t}{s}G_t(s,\tau)\frac{\tau}{t}\leq\tau.
\end{equation*}
Therefore \eqref{boundary-Green} and the Lipschitz bound for $E$ imply
 \begin{align*}
 \frac{t}{s}|D_s|
 &\leq |\ell|+\int_0^t\frac{t}{s}G_t(s,\tau)
        \norm{\nabla E(\tau)}_{L^\infty} |D_\tau|\dd\tau\\
 &\leq |\ell|+N_t\int_0^t\tau
        \norm{\nabla E(\tau)}_{L^\infty}\dd\tau
 \leq |\ell|+T\delta_0N_t.
\end{align*} 
 Since $T\leq1$, after decreasing $\delta_0$ so that
$\delta_0\leq1/2$, we obtain\
$N_t\leq2|\ell|$.  This proves
\begin{equation}\label{two-endpoint-position}
 |D_s|\leq C\frac{s}{t}|\ell|,\qquad0\leq s\leq t.
\end{equation}
The estimate is also valid at $s=0$ because $D_0=0$.

For $0<\tau\leq t$, the second line of \eqref{Green-kernel} gives
\begin{equation*}
 \partial_sG_t(0,\tau)=\frac{t-\tau}{t},
 \qquad |\partial_sG_t(0,\tau)|\leq1.
\end{equation*}
Hence, differentiating \eqref{boundary-Green} at $s=0$, we obtain
the exact identity
\begin{equation*}
 \dot D_0=\frac{\ell}{t}
 -\int_0^t\frac{t-\tau}{t}
 \bigl(E(\tau,\Gamma_\tau)-E(\tau,\Gamma'_\tau)\bigr)\dd\tau.
\end{equation*}
Using the Lipschitz bound for $E$ and the estimate
$|D_\tau|\leq2(\tau/t)|\ell|$ obtained above, it follows that
\begin{align*}
 |\dot D_0|
 &\leq\frac{|\ell|}{t}
 +2\frac{|\ell|}{t}
   \int_0^t\tau\norm{\nabla E(\tau)}_{L^\infty}\dd\tau\\
 &\leq(1+2T\delta_0)\frac{|\ell|}{t}
 \leq2\frac{|\ell|}{t},
\end{align*}
where the last inequality follows from $T\delta_0\leq1/2$.
\
Because $\dot\Gamma_0=\mathcal W_t(x,y)$, this proves
\eqref{W-Lipschitz}.

It remains to prove \eqref{Jholder}.  Evaluate $A_{s,t}$ along $\Gamma$ and
$\Gamma'$ and denote the two matrices by $A^x_{s,t}$ and $A^{x'}_{s,t}$.
More explicitly,
\begin{equation*}
 A^x_{s,t}=D_vX_{s,t}\bigl(x,\mathcal V_t(x,y)\bigr),\qquad
 A^{x'}_{s,t}=D_vX_{s,t}\bigl(x',\mathcal V_t(x',y)\bigr).
\end{equation*}
Writing $\Delta A_{s,t}=A^x_{s,t}-A^{x'}_{s,t}$ and subtracting
\eqref{A-equation}, we obtain
\begin{align*}
 \Delta A_{s,t}
 ={}&\int_s^t(\tau-s)\nabla E(\tau,\Gamma_\tau)
                    \Delta A_{\tau,t}\dd\tau\\
 &+\int_s^t(\tau-s)
 \bigl[\nabla E(\tau,\Gamma_\tau)
       -\nabla E(\tau,\Gamma'_\tau)\bigr]
 A^{x'}_{\tau,t}\dd\tau.
\end{align*}
Set
\begin{equation*}
 H:=\sup_{0\leq s<t}\frac{\|\Delta A_{s,t}\|}{t-s}.
\end{equation*}
Using \eqref{two-endpoint-position} and
$\|A^{x'}_{\tau,t}\|\leq C(t-\tau)$, division by $t-s$ gives
 \begin{equation*}
 H\leq CT\!\left(\int_0^t\norm{\nabla E(\tau)}_{L^\infty}\dd\tau\right)H
 +C|\ell|^\alpha\int_0^t
     [\nabla E(\tau)]_{C^\alpha}\dd\tau.
\end{equation*} 
The first coefficient is at most $1/2$ after decreasing $\delta_0$.
Absorption therefore yields
\begin{equation}\label{A-holder}
 \sup_{0\leq s<t}\frac{\|A^x_{s,t}-A^{x'}_{s,t}\|}{t-s}
 \leq C|\ell|^\alpha
       \int_0^t\norm{E(\tau)}_{C^{1,\alpha}}\dd\tau.
\end{equation}
Finally, \eqref{Psi-Jacobian} at $s=0$ gives
\begin{equation}\label{J-via-A}
 J_t(x,y)=
 \left|\det\left(-t^{-1}A^x_{0,t}\right)\right|^{-1}.
\end{equation}
The determinant and its reciprocal are Lipschitz on the fixed neighborhood
of $I$ specified by \eqref{A-close}.  Equations \eqref{A-holder} and
\eqref{J-via-A} prove \eqref{Jholder}.
\end{proof}

\section{Density and current produced}\label{sec:mixing}

We now use this characteristic reparametrization to derive
an exact formula for the transported\ 
density and then obtain the dispersive bounds needed by the fixed point.

\subsection{Exact representation formulas}

We begin by changing variables from terminal velocity to initial position
along the backward characteristics.

Let $E$ satisfy the hypotheses of Lemma~\ref{lem:twist}, and let
\begin{equation}\label{transported-f}
 f_E(t,x,v)=f_0\bigl(X_{0,t}(x,v),V_{0,t}(x,v)\bigr),
 \qquad
 \rho_E(t,x)=\int f_E(t,x,v)\dd v.
\end{equation}
For fixed $(t,x)$, the map $v\mapsto y=X_{0,t}(x,v)$ is a diffeomorphism.
By \eqref{Psi-definition}, its inverse and its Jacobian are
\begin{equation*}
 v=\mathcal V_t(x,y)
   =\Psi_{0,t}\left(x,\frac{x-y}{t}\right),
 \qquad
 \dd v=t^{-d}J_t(x,y)\dd y.
\end{equation*}
Together with \eqref{endpoint-identities}, this gives the exact representation
\begin{equation}\label{densityrepresentation}
 \rho_E(t,x)=t^{-d}\int_{\R^d}
 f_0\bigl(y,\mathcal W_t(x,y)\bigr)J_t(x,y)\dd y.
\end{equation}
Similarly,
\begin{equation}\label{currentrepresentation}
 j_E(t,x)=t^{-d}\int_{\R^d}
 \mathcal V_t(x,y)f_0\bigl(y,\mathcal W_t(x,y)\bigr)
 J_t(x,y)\dd y.
\end{equation}

The factor $t^{-d}$ is the dispersive scaling, $\mathcal W_t(x,y)$ is the
nonlinear replacement of $(x-y)/t$ inside the initial datum, and $J_t$
measures the deviation of the characteristic change of variables from free
transport.  The next lemma quantifies these observations.

\subsection{The dispersive estimate}

The following estimate shows that the free-transport decay and the conversion
of velocity regularity into spatial H\"older regularity persist for a small
nonlinear field.

\begin{lemma}[Dispersive estimates]\label{lem:mixing}
Assume that $0<\alpha<\theta\leq1$.\
Under the hypotheses of Lemma~\ref{lem:twist}, for $0<t\leq\min\{T,1\}$,
 \begin{align}
 \norm{\rho_E(t)}_{L^\infty}+\norm{j_E(t)}_{L^\infty}
 &\leq Ct^{-d/p}\norm{G_0}_{L^p},\label{mix-Linfty}\\
 \norm{\rho_E(t)}_{L^p}+\norm{j_E(t)}_{L^p}
 &\leq C\norm{G_0}_{L^p},\label{mix-Lp}\\
 [\rho_E(t)]_{C^\alpha}
 &\leq Ct^{-d/p-\alpha}
       \big(\norm{G_0}_{L^p}+\norm{G_\theta}_{L^p}\big).
       \label{mix-Calpha}
\end{align} 
The constants are uniform for fields satisfying \eqref{small-field}.
\end{lemma}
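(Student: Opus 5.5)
The plan is to work entirely from the representation formulas \eqref{densityrepresentation} and \eqref{currentrepresentation}, using the bounds of Lemma~\ref{lem:twist}. Set $u=(x-y)/t$ and $\eta:=C\int_0^t\norm{E}_{L^\infty}\leq C\delta_0\leq 1$. By \eqref{VWfree}, $\la \mathcal W_t(x,y)\ra\geq c\la u\ra$ and $\abs{\mathcal V_t(x,y)}\leq C\la u\ra$. Hence, by the definition of $G_0$,
\begin{equation*}
 \abs{f_0(y,\mathcal W_t(x,y))}\leq C G_0(y)\la (x-y)/t\ra^{-m}.
\end{equation*}

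\textbf{$L^\infty$ bound.} Combining this with \eqref{twistbounds},
\begin{equation*}
 \abs{\rho_E(t,x)}+\abs{j_E(t,x)}\leq C t^{-d}\int G_0(y)\la (x-y)/t\ra^{-(m-1)}\dd y .
\end{equation*}
Apply H\"older with $p$ and $p'$. The kernel $K_t(z)=t^{-d}\la z/t\ra^{1-m}$ satisfies $\norm{K_t}_{L^{p'}}=t^{-d+d/p'}\norm{\la\cdot\ra^{1-m}}_{L^{p'}}=Ct^{-d/p}$, which is finite because $(m-1)p'>d$ (this uses $m>d+1$). This gives \eqref{mix-Linfty}.

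\textbf{$L^p$ bound.} Young's inequality with $\norm{K_t}_{L^1}=C$ gives \eqref{mix-Lp}.

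\textbf{H\"older bound.} Fix $x,x'$ with $\ell=x-x'$. If $\abs\ell\geq t$, then \eqref{mix-Linfty} gives
\begin{equation*}
 \abs{\rho_E(t,x)-\rho_E(t,x')}\leq Ct^{-d/p}\leq Ct^{-d/p-\alpha}\abs\ell^\alpha .
\end{equation*}
For $\abs\ell<t$, split the difference of the integrands in \eqref{densityrepresentation} as
\begin{equation*}
 t^{-d}\!\int\!\bigl[f_0(y,\mathcal W_t(x,y))-f_0(y,\mathcal W_t(x',y))\bigr]J_t(x,y)\dd y
 +t^{-d}\!\int\! f_0(y,\mathcal W_t(x',y))\bigl[J_t(x,y)-J_t(x',y)\bigr]\dd y.
\end{equation*}

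The second term is handled by \eqref{Jholder} and the $L^\infty$ argument. It is bounded by $C\abs\ell^\alpha\delta_0 t^{-d/p}\leq C\abs\ell^\alpha t^{-d/p-\alpha}$, since $t\leq1$.

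For the first term, \eqref{W-Lipschitz} gives $h:=\mathcal W_t(x,y)-\mathcal W_t(x',y)$ with $\abs h\leq C\abs\ell/t$. When $\abs h\leq1$, the definition of $G_\theta$ (applied at base velocity $\mathcal W_t(x',y)$, whose weight is comparable to $\la u'\ra$ with $u'=(x'-y)/t$) yields
\begin{equation*}
 \abs{f_0(y,\mathcal W_t(x,y))-f_0(y,\mathcal W_t(x',y))}\leq CG_\theta(y)\la u'\ra^{-m}(\abs\ell/t)^\theta .
\end{equation*}
If $C\abs\ell/t>1$, I would subdivide the segment $[x',x]$ into $O(1)$ pieces, or simply fall back to the $G_0$ bound, since then $\abs\ell\sim t$. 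The same H\"older computation with the kernel $K_t$ then gives
\begin{equation*}
 Ct^{-d/p}(\abs\ell/t)^\theta\norm{G_\theta}_{L^p}\leq Ct^{-d/p-\alpha}\abs\ell^\alpha\norm{G_\theta}_{L^p},
\end{equation*}
because $\abs\ell/t<1$ and $\alpha<\theta$.

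\textbf{Main obstacle.} The step needing the most care is matching the velocity weights. The weight $\la v\ra^m$ in $G_\theta$ is evaluated at the base point, and the two evaluation points must be comparable to $\la(x-y)/t\ra$ uniformly. This follows from \eqref{VWfree} together with $\abs{u-u'}\leq\abs\ell/t<1$, which gives $\la u\ra\sim\la u'\ra$. I would also check that using the $C^\theta_{\rm loc}$ representative makes these pointwise evaluations legitimate for a.e.\ $y$. All constants depend only on Lemma~\ref{lem:twist} and so are uniform under \eqref{small-field}.
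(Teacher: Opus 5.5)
Your proposal is correct and follows essentially the paper's proof. The shared steps are the weight comparison from \eqref{VWfree}, H\"older/Young against kernels $t^{-d}\la z/t\ra^{-m}$ (or $\la z/t\ra^{1-m}$), and the same two-term split of the increment via \eqref{W-Lipschitz}, the $G_\theta$ envelope, and \eqref{Jholder}, with the $L^\infty$ bound covering large increments. The differences are only cosmetic: you use one kernel $\la\cdot\ra^{1-m}$ for both $\rho_E$ and $j_E$, and you take the threshold $\abs\ell<t$ with a $G_0$ fallback where the paper uses $\abs\ell\leq ct$.
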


\begin{proof}
By \eqref{VWfree}, after decreasing $\delta_0$ if necessary,
\begin{equation}\label{weightcomparison}
 \la \mathcal W_t(x,y)\ra^{-m}
 \leq C\la (x-y)/t\ra^{-m}.
\end{equation}
Define
\begin{equation*}
 K_t(z):=t^{-d}\la z/t\ra^{-m}.
\end{equation*}
Writing $p'=p/(p-1)$ for the H\"older conjugate of $p$, we have
 $\norm{K_t}_{L^1}\leq C$ and
$\norm{K_t}_{L^{p'}}\leq Ct^{-d/p}$.  From
\eqref{densityrepresentation}, \eqref{twistbounds}, and
\eqref{weightcomparison},
\begin{equation}\label{rho-convolution}
 \abs{\rho_E(t,x)}\leq C(K_t*G_0)(x).
\end{equation}
Young's inequality proves \eqref{mix-Linfty} and the density part of
\eqref{mix-Lp}.

For the current, \eqref{VWfree} gives
\begin{equation*}
 \abs{\mathcal V_t(x,y)}\leq C\big(1+\abs{x-y}/t\big).
\end{equation*}
The kernel
\begin{equation*}
 \widetilde K_t(z):=t^{-d}\la z/t\ra^{-m+1}
\end{equation*}
satisfies
\begin{equation*}
 \norm{\widetilde K_t}_{L^1}\leq C,\qquad
 \norm{\widetilde K_t}_{L^{p'}}\leq Ct^{-d/p}.
\end{equation*}
Indeed, the second norm is finite because $(m-1)p'>d$, which follows from
$m>d+1$.  Applying Young's inequality to \eqref{currentrepresentation}
proves the estimates for $j_E$ in both \eqref{mix-Linfty} and
\eqref{mix-Lp}.

To prove \eqref{mix-Calpha}, let $\ell\in\R^d$ and split the increment of
\eqref{densityrepresentation} as
\begin{align}
 \rho_E(t,x+\ell)-\rho_E(t,x)
 &=I_1(t,x,\ell)+I_2(t,x,\ell),\label{rho-increment-split}\\
 I_1
 &=t^{-d}\int
 \bigl[f_0(y,\mathcal W_t(x+\ell,y))
       -f_0(y,\mathcal W_t(x,y))\bigr]
 J_t(x+\ell,y)\dd y,\nonumber\\
 I_2
 &=t^{-d}\int f_0(y,\mathcal W_t(x,y))
 \bigl[J_t(x+\ell,y)-J_t(x,y)\bigr]\dd y.\nonumber
\end{align}
Suppose first that $\abs \ell\leq c t$, where $c>0$ is sufficiently small.
Then
\eqref{W-Lipschitz} implies
\begin{equation*}
 \abs{\mathcal W_t(x+\ell,y)-\mathcal W_t(x,y)}
 \leq C\frac{\abs \ell}{t}\leq1.
\end{equation*}
The definition of $G_\theta$, together with the comparability of the two
velocity weights, therefore gives
\begin{equation*}
 \abs{f_0(y,\mathcal W_t(x+\ell,y))
       -f_0(y,\mathcal W_t(x,y))}
 \leq C\left(\frac{\abs \ell}{t}\right)^\theta
 G_\theta(y)\la(x-y)/t\ra^{-m}.
\end{equation*}
Using the preceding pointwise estimate exactly as in
\eqref{rho-convolution}, Young's inequality gives
 \begin{equation}\label{I1-estimate}
 \norm{I_1(t,\cdot,\ell)}_{L^\infty}
 \leq C\abs\ell^\alpha t^{-\alpha-d/p}\norm{G_\theta}_{L^p}.
\end{equation} 
For $I_2$, the Jacobian estimate \eqref{Jholder}, the smallness condition
\eqref{small-field}, and the same convolution kernel give
 \begin{equation}\label{I2-estimate}
 \norm{I_2(t,\cdot,\ell)}_{L^\infty}
 \leq C\abs\ell^\alpha t^{-d/p}\norm{G_0}_{L^p}.
\end{equation} 
Together, \eqref{I1-estimate}--\eqref{I2-estimate} yield
 \begin{equation}\label{difference-small-h}
 \norm{\rho_E(t,\cdot+\ell)-\rho_E(t)}_{L^\infty}
 \leq C\abs \ell^\alpha t^{-\alpha-d/p}
       \big(\norm{G_0}_{L^p}+\norm{G_\theta}_{L^p}\big)
\end{equation} 
for $\abs \ell\leq ct$.  We also have, for all $\ell$,
 \begin{equation}\label{difference-zero}
 \norm{\rho_E(t,\cdot+\ell)-\rho_E(t)}_{L^\infty}
 \leq Ct^{-d/p}\norm{G_0}_{L^p}.
\end{equation} 
If $\abs \ell\leq ct$, divide \eqref{difference-small-h} by
$\abs \ell^\alpha$. If $\abs \ell>ct$, divide \eqref{difference-zero} by
$\abs \ell^\alpha$ and use $\abs \ell^{-\alpha}\leq Ct^{-\alpha}$.
This proves \eqref{mix-Calpha}.
\end{proof}

The transported density and current satisfy
\begin{equation}\label{continuity-equation}
 \partial_t\rho_E+\nabla_x\cdot j_E=0
\end{equation}
in distributions.  Hence Lemma~\ref{lem:mixing} gives
 \begin{equation}\label{time-derivative-density}
 \norm{\partial_t\rho_E}_{L^\infty(0,T;W^{-1,p})}
 \leq C\norm{G_0}_{L^p}.
\end{equation} 
Thus the time compactness needed below comes from the continuity equation and
the current estimate.  In the Sobolev theories \cite{JT,Tae}, the analogous
compactness is obtained from a velocity-averaging estimate.

We shall also use tightness.  Since
\begin{equation*}
 \abs{X_t(y,w)}
 \leq\abs y+T\abs w+T\norm E_{L^1_tL^\infty_x},
\end{equation*}
volume preservation implies
\begin{equation}\label{tail-estimate}
 \sup_{0\leq t\leq T}\int_{\abs x>R}\rho_E(t,x)\dd x
 \leq
 \iint_{\abs y+T\abs w>R-C}f_0(y,w)\dd y\dd w
 \xrightarrow[R\to\infty]{}0,
\end{equation}
uniformly for fields in a bounded set satisfying
\eqref{small-field}.

\section{The density fixed point}\label{sec:fixed-point}

The existence argument uses the compact-map form of the
classical Schauder fixed-point theorem, as in
\cite[Section~4.1]{NguyenMemoir}; see also
\cite[Section~11.1]{GilbargTrudinger}.

\begin{theorem}[Schauder fixed-point theorem]\label{thm:Schauder}
Let $\mathcal X$ be a Banach space, let
$\mathcal K\subset\mathcal X$ be nonempty, closed, and convex, and let
$F:\mathcal K\to\mathcal K$ be continuous.  If $F(\mathcal K)$ is relatively
compact in $\mathcal X$, then $F$ has a fixed point in $\mathcal K$.
\end{theorem}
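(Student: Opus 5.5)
The statement is the classical Schauder fixed-point theorem in its compact-map form, which the paper quotes from \cite[Section~11.1]{GilbargTrudinger}. I would reduce it to the Brouwer fixed-point theorem in finite dimensions, via finite-dimensional approximation of the compact map.

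\textbf{Step 1: reduce to a compact convex set.} Put $\mathcal C:=\overline{\operatorname{conv}}\,F(\mathcal K)$. Mazur's theorem says the closed convex hull of a relatively compact set in a Banach space is compact, so $\mathcal C$ is compact. Because $\mathcal K$ is closed, convex, and contains $F(\mathcal K)$, we get $\mathcal C\subset\mathcal K$. Hence $F$ restricts to a continuous self-map $F:\mathcal C\to\mathcal C$ of a compact convex set, and it suffices to prove the theorem in that case.

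\textbf{Step 2: Schauder projections.} Fix $\varepsilon>0$. By compactness, choose a finite $\varepsilon$-net $x_1,\dots,x_N\in\mathcal C$ of $\mathcal C$. For $z\in\mathcal C$, set
\[
\mu_i(z)=\max\{\varepsilon-\norm{z-x_i},0\},
\qquad
P_\varepsilon z=\frac{\sum_i\mu_i(z)x_i}{\sum_i\mu_i(z)}.
\]
The denominator is positive because the $x_i$ form a net. The map $P_\varepsilon$ is continuous, takes values in $\mathcal C_\varepsilon:=\operatorname{conv}\{x_1,\dots,x_N\}\subset\mathcal C$, and satisfies $\norm{P_\varepsilon z-z}<\varepsilon$, since only points $x_i$ within distance $\varepsilon$ of $z$ receive positive weight. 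The set $\mathcal C_\varepsilon$ is compact, convex, and finite-dimensional, so it is homeomorphic to a closed ball. Brouwer's theorem therefore gives a fixed point $z_\varepsilon=P_\varepsilon F(z_\varepsilon)$, and hence $\norm{F(z_\varepsilon)-z_\varepsilon}<\varepsilon$.

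\textbf{Step 3: pass to the limit.} Take $\varepsilon=1/n$. By compactness of $\mathcal C$, a subsequence $z_{\varepsilon_k}$ converges to some $z\in\mathcal C$. Continuity of $F$ then gives $F(z)=z$, and $z\in\mathcal K$.

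\textbf{Main obstacle.} The delicate point is Step 1: compactness of the closed convex hull (Mazur). It is proved by showing that $\operatorname{conv}F(\mathcal K)$ is totally bounded. Given $\varepsilon>0$, cover $F(\mathcal K)$ by finitely many $\varepsilon$-balls with centers $y_j$. Every convex combination of points of $F(\mathcal K)$ then lies within $\varepsilon$ of $\operatorname{conv}\{y_j\}$, and this set is compact because it is the image of a simplex. So the convex hull is covered by finitely many $2\varepsilon$-balls. Completeness of $\mathcal X$ is used here: the closure of a totally bounded set in a Banach space is compact. Brouwer's theorem itself is taken as known.
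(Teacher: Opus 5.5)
Your proof is correct and matches the standard argument: the paper gives no proof of its own and simply cites the result (Gilbarg--Trudinger, Section~11.1, and the author's memoir), where the compact convex case is likewise proved by Schauder projections onto a finite $\varepsilon$-net combined with Brouwer's theorem. The precompact-image version stated here is then reduced to that case through Mazur's theorem, exactly as in your Step~1, and your total-boundedness argument for Mazur's theorem correctly identifies where the completeness of $\mathcal X$ is used.
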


We now construct $\mathcal X$, $\mathcal K$, and $F$ for
Vlasov--Poisson.

\subsection{The closed convex set of admissible
densities}

We choose a closed convex set that records the spatial
regularity, time compactness, mass control, and uniform tightness furnished
by the dispersive estimate.  The same bounds will imply that the image of
the density map is relatively compact.

Put
\begin{equation}\label{qdef}
 q=\frac dp+\alpha.
\end{equation}
By \eqref{alpha-r-choice}, $rq<1$.  Fix
$0<\beta<\alpha$ and use the Banach space
\begin{equation}\label{fixed-point-space}
 \mathcal X=L^r\big(0,T;C^\beta(\R^d)\big),\qquad
 \norm{u}_{\mathcal X}
 :=\left(\int_0^T
   \big(\norm{u(t)}_{L^\infty}+[u(t)]_{C^\beta}\big)^r\dd t\right)^{1/r}.
\end{equation}
Let  $M=\norm{f_0}_{L^1}$  and
$\rho_0(x)=\int f_0(x,v)\dd v$.

Choose constants $R_p,R_t>0$, with sufficiently large dimensional factors,
comparable to
 $\norm{G_0}_{L^p}+\norm{G_\theta}_{L^p}$, and choose
 \begin{equation}\label{Ralpha}
 R_\alpha=2C\big(\norm{G_0}_{L^p}+\norm{G_\theta}_{L^p}\big)T^{1/r-q},
\end{equation} 
where $C$ is the constant in Lemma~\ref{lem:mixing}.  Let $\mathcal K$ be
the set of nonnegative densities $\varrho$ such that
 \begin{align}
 &\norm{\varrho}_{L^\infty_t(L^1_x\cap L^p_x)}\leq M+R_p,
 \qquad \norm{\varrho}_{L^r_tC^\alpha_x}\leq R_\alpha,
 \label{K-spatial}\\
 &\norm{\partial_t\varrho}_{L^\infty_tW^{-1,p}_x}\leq R_t,
 \qquad \varrho(0)=\rho_0,\qquad
 \int_{\R^d}\varrho(t,x)\dd x=M\quad\text{for a.e. }t,
 \label{K-time}
\end{align} 
and which obey the common tightness modulus on the right-hand side of
\eqref{tail-estimate}.  We replace this set by its closure
in $\mathcal X$.\  Enlarging the harmless constants makes it
nonempty.  Indeed, the free-transport density in
\eqref{free-density-intro} satisfies the same convolution, increment,
continuity-equation, and tightness estimates with $J_t\equiv1$ and
$\mathcal V_t(x,y)=\mathcal W_t(x,y)=(x-y)/t$.  The set $\mathcal K$ is
convex because every defining
condition is convex.

\subsection{The density self-map}

We next associate with each $\varrho\in\mathcal K$ the
density obtained by transporting the fixed initial datum under the field
generated by $\varrho$.  This defines a nonlinear self-map.

For $\varrho\in\mathcal K$, define
\begin{equation}\label{fixedpointmap}
 E_\varrho=\sigma\nabla\Delta^{-1}\varrho,
 \qquad
 \mathcal G[\varrho](t,x)
 =\int_{\R^d} f_0\bigl(X_{0,t}^\varrho(x,v),
                           V_{0,t}^\varrho(x,v)\bigr)\dd v,
\end{equation}
where $(X_{s,t}^\varrho,V_{s,t}^\varrho)$ are the backward characteristics
generated by $E_\varrho$.  The electric field associated with the output
density is
\begin{equation}\label{output-field}
 \widetilde E_\varrho
 :=\sigma\nabla\Delta^{-1}\mathcal G[\varrho].
\end{equation}

\begin{lemma}[Closedness and compactness]
\label{lem:compactness}
The set $\mathcal K$ is a nonempty closed convex subset of the Banach space
$\mathcal X$, and all the bounds
\eqref{K-spatial}--\eqref{K-time} and the tightness condition remain valid
on $\mathcal K$.  Moreover, every family satisfying these bounds and the
same tightness modulus is relatively compact in $\mathcal X$.
\end{lemma}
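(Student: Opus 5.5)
Nonemptiness and convexity are essentially already settled in the text. Every defining condition in \eqref{K-spatial}--\eqref{K-time} and the tightness condition is either a norm bound, a linear constraint, or the pointwise sign condition $\varrho\geq0$, so the pre-closure set is convex. The closure of a convex set in a normed space is convex. For nonemptiness I will take the free-transport density \eqref{free-density-intro}: it satisfies \eqref{mix-Linfty}--\eqref{mix-Calpha} with $J_t\equiv1$. Integrating $t^{-rq}$ (integrable since $rq<1$) gives $\norm{\rho^{\rm free}}_{L^r_tC^\alpha}\leq C(\norm{G_0}_{L^p}+\norm{G_\theta}_{L^p})T^{1/r-q}\leq R_\alpha$, and the remaining constraints are checked exactly as stated in the text.

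The substantive step is to show that the bounds survive passage to limits in $\mathcal X$. Suppose $\varrho_n\to\varrho$ in $L^r_tC^\beta_x$. Passing to a subsequence, $\varrho_n(t)\to\varrho(t)$ uniformly for a.e.\ $t$, so nonnegativity is preserved. The $L^1_x\cap L^p_x$ bounds follow by Fatou applied pointwise in $t$. The $C^\alpha$ bound follows from lower semicontinuity of $[\cdot]_{C^\alpha}$ under uniform convergence, combined with Fatou in $t$. The tightness modulus passes to the limit by Fatou on $\{\abs x>R\}$. Mass conservation $\int\varrho(t)=M$ then follows from uniform tightness, local uniform convergence, and the uniform $L^p$ bound: dominated convergence on balls plus small tails. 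For $\partial_t\varrho$, I will test against $\phi\in C_c^\infty((0,T)\times\R^d)$. Since $\varrho_n\to\varrho$ in $L^1_{t,\rm loc}L^\infty_x$, we get $\iint\varrho\,\partial_t\phi=\lim\iint\varrho_n\partial_t\phi$, and this is bounded by $R_t\norm{\phi}_{L^1_tW^{1,p'}}$; duality then gives $\norm{\partial_t\varrho}_{L^\infty_tW^{-1,p}}\leq R_t$. The initial condition $\varrho(0)=\rho_0$ needs a meaning. The time-derivative bound makes each $\varrho_n$ Lipschitz $[0,T]\to W^{-1,p}$, uniformly in $n$, so $\varrho_n\to\varrho$ in $C([0,T];W^{-1,p}_{\rm loc})$ along a subsequence. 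Hence the trace $\varrho(0)=\rho_0$ is preserved.

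For relative compactness I plan to use Aubin--Lions--Simon \cite{Simon} locally and then handle the tails and the time endpoints. On a ball $B_R$, $C^\alpha(\overline{B_R})\Subset C^\beta(\overline{B_R})\hookrightarrow W^{-1,p}(B_R)$. The family is bounded in $L^r_tC^\alpha$, and its time derivatives are bounded in $L^\infty_tW^{-1,p}$. Simon's theorem therefore gives relative compactness in $L^r(0,T;C^\beta(\overline{B_R}))$ for each $R$. A diagonal argument then produces a subsequence converging in $L^r_tC^\beta(\overline{B_R})$ for every $R$.

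The main obstacle is upgrading this to convergence in the global norm $L^r_t(L^\infty\cap C^\beta)(\R^d)$, because tightness only controls $L^1$ mass outside $B_R$, not sup norms. My plan is to interpolate. Off $B_R$, a function with $\int_{\abs x>R}\abs u\leq\eta$ and $[u]_{C^\alpha}\leq K$ satisfies $\norm{u}_{L^\infty(\abs x>R+1)}\leq C\eta^{\alpha/(d+\alpha)}K^{d/(d+\alpha)}$, by averaging over a small ball around a point of near-maximal value. Then $[u]_{C^\beta}\leq C\norm{u}_{L^\infty}^{1-\beta/\alpha}[u]_{C^\alpha}^{\beta/\alpha}$ on the exterior region, with pairs of points straddling the sphere handled by the local convergence on a slightly larger ball. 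Integrating in time with Hölder's inequality, using the $L^r_tC^\alpha$ bound and the fact that $\eta\to0$ uniformly from the common tightness modulus, makes the exterior contribution uniformly small in $L^r_t$. Together with local convergence this yields Cauchy subsequences in $\mathcal X$. The exponent bookkeeping is where I expect difficulty: the $C^\alpha$ control is only in $L^r_t$, so the interpolated exterior bound must be taken with the $L^r_t$ norm on the $K$ factor. The same argument applies verbatim to any family with these bounds and tightness modulus, which gives the final assertion.
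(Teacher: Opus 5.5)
Your proposal is correct and follows essentially the same route as the paper: Aubin--Lions--Simon on balls with a diagonal argument, the exterior interpolation (tightness plus $C^\alpha$ gives $L^\infty$, which gives $C^\beta$), and passage of each defining condition to the limit via a.e.\ convergence and Fatou, duality for $\partial_t\varrho$, and equi-Lipschitz $W^{-1,p}$ continuity for the initial trace. The exponent bookkeeping you were worried about closes as in the paper's bound $C\omega(R)^\kappa(1+\norm{\varrho}_{L^r_tC^\alpha_x})$, because the resulting power of $H_\varrho(t)$ is at most one; restricting the averaging radius to $s\leq1$ only adds a harmless $C\omega(R)$ term.
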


\begin{proof}
Fix a ball $B_R$.  We have the compact and continuous embeddings
\begin{equation*}
 C^\alpha(B_R)\Subset C^\beta(B_R)
 \hookrightarrow W^{-1,p}(B_R).
\end{equation*}
The Aubin--Lions--Simon compactness theorem \cite{Simon}, using
\eqref{K-spatial} and \eqref{K-time}, gives relative compactness in
$L^r(0,T;C^\beta(B_R))$.  A diagonal argument over $R=1,2,\ldots$ gives
convergence in $L^r(0,T;C^\beta_{\rm loc}(\R^d))$ along a subsequence.

We upgrade this local convergence to convergence in the Banach space
$\mathcal X$.  Let $\omega(R)\to0$ be the common tightness modulus, so that
\begin{equation*}
 \int_{\abs x>R}\varrho(t,x)\dd x\leq\omega(R)
 \qquad\text{for a.e. }t
\end{equation*}
for every density in the family.  Positivity and the spatial H\"older bound
give, with $H_\varrho(t)=\norm{\varrho(t)}_{C^\alpha}$,
\begin{equation}\label{tail-Linfty-holder}
 \norm{\varrho(t)}_{L^\infty(\{\abs x>R+1\})}
 \leq C\omega(R)^{\alpha/(d+\alpha)}
       H_\varrho(t)^{d/(d+\alpha)}+C\omega(R).
\end{equation}
Indeed, one may average on a ball of radius $0<s\leq1$ and minimize the
bound $Cs^{-d}\omega(R)+s^\alpha H_\varrho(t)$.  Interpolation between
$C^0$ and $C^\alpha$, applied on the exterior region with a slightly
enlarged cutoff, then yields some $\kappa>0$ such that
\begin{equation}\label{tail-Cbeta}
 \norm{\varrho}_{L^r(0,T;C^\beta(\{\abs x>R+2\}))}
 \leq C\omega(R)^\kappa
       \bigl(1+\norm{\varrho}_{L^r_tC^\alpha_x}\bigr).
\end{equation}
The right-hand side tends to zero uniformly over the family.  Combining
\eqref{tail-Cbeta} with the local convergence proves relative compactness in
$\mathcal X$.

It remains to verify that the defining conditions survive passage to the
closure.  Let $\varrho_n$ belong to the set before closure and suppose that
$\varrho_n\to\varrho$ in $\mathcal X$.  After passing to a subsequence,
$\varrho_n(t)\to\varrho(t)$ in $C^\beta(\R^d)$ for almost every $t$.
Positivity and the $C^\alpha$ bound pass to the limit pointwise in time, and
Fatou's lemma gives
\begin{equation*}
 \norm{\varrho}_{L^r_tC^\alpha_x}
 \leq\liminf_{n\to\infty}
       \norm{\varrho_n}_{L^r_tC^\alpha_x}.
\end{equation*}
The uniform $L^\infty_tL^p_x$ bound passes by weak-star compactness and
identification of the local limit.  The common tightness modulus, together
with local convergence, gives for almost every $t$
\begin{equation*}
 \varrho_n(t)\longrightarrow\varrho(t)\quad\hbox{in }L^1(\R^d).
\end{equation*}
Consequently $\int\varrho(t)=M$, the $L^\infty_tL^1_x$ bound is retained,
and the same tightness modulus holds for $\varrho$.

The derivatives $\partial_t\varrho_n$ are weak-star precompact in
$L^\infty(0,T;W^{-1,p})$.  Testing against compactly supported
space--time functions identifies every weak-star limit with
$\partial_t\varrho$, so the derivative bound is retained.  Moreover, on
each ball the functions $\varrho_n$ are equi-Lipschitz in time with values
in $W^{-1,p}(B_R)$.  Their convergence in
$L^r(0,T;W^{-1,p}(B_R))$ therefore improves to convergence in
$C([0,T];W^{-1,p}(B_R))$.  Since $\varrho_n(0)=\rho_0$, the initial trace
of $\varrho$ is also $\rho_0$.  Thus the closure retains all the defining
properties.
\end{proof}

\begin{lemma}[The density self-map]\label{lem:selfmap}
For $T>0$ sufficiently small, the map $\mathcal G$ in
\eqref{fixedpointmap} satisfies
\begin{equation*}
 \mathcal G[\mathcal K]\subset\mathcal K
\end{equation*}
and is continuous from $\mathcal K$ to $\mathcal K$ in
the norm of $\mathcal X$.  Moreover, $\mathcal G[\mathcal K]$ is relatively
compact in $\mathcal X$.
\end{lemma}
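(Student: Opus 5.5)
The plan has three parts: check that $E_\varrho$ is small in the sense of Lemma~\ref{lem:twist}, then verify the defining properties of $\mathcal K$ for the output one by one, and finally prove continuity of $\mathcal G$.

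\textbf{Step 1: the field is small.} Let $\varrho\in\mathcal K$. Elliptic H\"older estimates for $\nabla\Delta^{-1}$, using the $L^1\cap L^p$ bound for low frequencies and the $C^\alpha$ bound for high frequencies, give
\begin{equation*}
 \norm{E_\varrho(t)}_{C^{1,\alpha}}\leq C\bigl(\norm{\varrho(t)}_{L^1\cap L^p}+\norm{\varrho(t)}_{C^\alpha}\bigr).
\end{equation*}
Integrating in time and applying H\"older's inequality with the $L^r_t$ bound yields
\begin{equation*}
 \int_0^T\norm{E_\varrho}_{C^{1,\alpha}}\dd t\leq C(M+R_p)T+CT^{1-1/r}R_\alpha.
\end{equation*}
By \eqref{Ralpha}, $T^{1-1/r}R_\alpha\sim T^{1-q}$, which tends to $0$ as $T\to0$ because $q<1$. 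So for $T$ small, depending only on $A_0$, condition \eqref{small-field} holds, and Lemmas~\ref{lem:twist} and~\ref{lem:mixing} apply uniformly over $\mathcal K$.

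\textbf{Step 2: $\mathcal G[\varrho]\in\mathcal K$.} Each defining condition follows from an earlier estimate.
\begin{itemize}
\item Nonnegativity is inherited from $f_0\geq0$.
\item Mass conservation $\int\mathcal G[\varrho](t)=M$ follows from volume preservation of the characteristic flow.
\item The initial condition $\mathcal G[\varrho](0)=\rho_0$ holds because the flow is the identity at $t=0$.
\item The $L^p$ bound is \eqref{mix-Lp}, which fixes the choice of $R_p$.
\item For the H\"older bound, \eqref{mix-Calpha} gives
\begin{equation*}
 \norm{\mathcal G[\varrho]}_{L^r_tC^\alpha_x}\leq C\bigl(\norm{G_0}_{L^p}+\norm{G_\theta}_{L^p}\bigr)\Bigl(\int_0^T t^{-rq}\dd t\Bigr)^{1/r}\leq C'\bigl(\norm{G_0}_{L^p}+\norm{G_\theta}_{L^p}\bigr)T^{1/r-q},
\end{equation*}
where we used $rq<1$. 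The $L^\infty$ part is handled by \eqref{mix-Linfty} in the same way. This is why $R_\alpha$ carries the factor $2C$.
\item The time-derivative bound is \eqref{time-derivative-density}, which fixes $R_t$.
\item Tightness is \eqref{tail-estimate}.
\end{itemize}
So $\mathcal G$ maps into the pre-closure set, hence into $\mathcal K$. Relative compactness of $\mathcal G[\mathcal K]$ then follows directly from Lemma~\ref{lem:compactness}.

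\textbf{Step 3: continuity (main obstacle).} Take $\varrho_n\to\varrho$ in $\mathcal X$. Since $\beta>0$, the same elliptic argument as in Step 1 gives $E_{\varrho_n}\to E_\varrho$ in $L^1_tC^{1}_x$, with the low frequencies controlled by $L^1$ convergence, which itself comes from tightness. Gronwall's inequality on \eqref{backwardflow} then gives locally uniform convergence of the characteristics $(X^{\varrho_n}_{0,t},V^{\varrho_n}_{0,t})\to(X^\varrho_{0,t},V^\varrho_{0,t})$.

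To pass this to the densities, I would use the representation \eqref{densityrepresentation}. Here $\mathcal W_t^{n}\to\mathcal W_t$ and $J^n_t\to J_t$ pointwise, and the integrand is dominated by $C K_t*G_0$. The uniform $C^\theta_v$ continuity of $f_0(y,\cdot)$ for a.e.\ $y$ gives convergence of the integrand, so dominated convergence yields $\mathcal G[\varrho_n](t,x)\to\mathcal G[\varrho](t,x)$ for every $t>0$ and every $x$.

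Finally, the uniform $C^\alpha$ bounds, together with interpolation against $C^\beta$ and the tightness argument of Lemma~\ref{lem:compactness}, upgrade this to convergence in $C^\beta$ for each $t$. Domination by the integrable function $Ct^{-q}$ in $L^r_t$ then gives convergence in $\mathcal X$.

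An alternative route to continuity is to argue by compactness: every subsequence of $\mathcal G[\varrho_n]$ has a convergent subsequence by Lemma~\ref{lem:compactness}, and the limit is identified as $\mathcal G[\varrho]$ via the pointwise convergence above. The delicate point in either route is the pointwise convergence of $f_0$ composed with the flows, since $f_0$ has no spatial regularity. This is exactly why the argument goes through the $y$-variable representation, in which $f_0$ is evaluated only along its velocity argument.
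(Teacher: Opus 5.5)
Your proof is correct. Steps~1--2 match the paper: the same Schauder bound on $E_\varrho$, the same use of \eqref{mix-Calpha} with $rq<1$, the same appeal to \eqref{mix-Lp}, \eqref{time-derivative-density} and \eqref{tail-estimate}, and Lemma~\ref{lem:compactness} for relative compactness. Your continuity argument, however, takes a genuinely different route.

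\textbf{The paper's route.} It proves only local field convergence in $L^1(0,T;C^{1,\beta'}(O))$, by splitting the Newtonian kernel into near and far parts. It deduces locally uniform flow convergence. It then shows that $f_0(X^{\varrho_n}_{0,t},V^{\varrho_n}_{0,t})\to f_0(X^{\varrho}_{0,t},V^{\varrho}_{0,t})$ in $C([0,T];L^1\cap L^p)$, by approximating $f_0$ with $g\in C_c^\infty$ and using volume preservation. It concludes with precompactness from Lemma~\ref{lem:compactness} plus distributional identification of every subsequential limit.

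\textbf{Your route.} You prove global $L^1_tC^1_x$ field convergence. Your use of tightness to obtain $L^1_tL^1_x$ convergence of $\varrho_n$ for the low frequencies is correct. You then get pointwise convergence of $\mathcal G[\varrho_n](t,x)$ at every $(t,x)$, by dominated convergence in \eqref{densityrepresentation} with dominant $CK_t*G_0$, which is finite everywhere. The upgrade to convergence in $\mathcal X$ then uses:
\begin{itemize}
\item Arzel\`a--Ascoli at each fixed $t$;
\item the tail bound \eqref{tail-Linfty-holder};
\item interpolation between $L^\infty$ and $C^\alpha$;
\item dominated convergence in $t$ with the dominant $Ct^{-q}\in L^r(0,T)$.
\end{itemize}

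\textbf{Comparison.} Your route is direct and avoids subsequences, but it costs more. You need convergence of the inverse maps $\mathcal V^n_t$, which follows from uniform convergence of $X^n_{0,t}(x,\cdot)$ and the uniform bi-Lipschitz bound \eqref{A-close}. You also need convergence of the Jacobians $J^n_t$. This requires a Gronwall argument on \eqref{A-equation}, using $\nabla E_n\to\nabla E$ in $L^1_tL^\infty_x$ together with the $C^\alpha$ continuity of $\nabla E(\tau,\cdot)$ along converging trajectories. You assert both convergences without proof, so write them out. Your route also genuinely uses the continuity of $f_0(y,\cdot)$ in $v$.

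Your remark that pointwise convergence of $f_0$ composed with the flows is ``the delicate point'' is misplaced. The paper's $L^1$-approximation argument never needs pointwise convergence or any regularity of $f_0$. It has the further advantage of being reused verbatim for time continuity and continuous dependence in Section~\ref{sec:continuity}. Your ``alternative route'' (compactness plus identification of the limit) is essentially the paper's skeleton, with pointwise rather than distributional identification.
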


\begin{proof}
\emph{Self-mapping property.}
For $\varrho\in\mathcal K$, the global Schauder estimate for the Poisson
equation gives
\begin{equation}\label{E-varrho-bound}
 \norm{E_\varrho}_{L^r_tC^{1,\alpha}_x}
 \leq C\big((M+R_p)T^{1/r}+R_\alpha\big).
\end{equation}
Consequently,
\begin{equation*}
 \norm{E_\varrho}_{L^1_tC^{1,\alpha}_x}
 \leq CT^{1-1/r}\big((M+R_p)T^{1/r}+R_\alpha\big)
 \leq C A_0T+C A_0T^{1-q}.
\end{equation*}
Because $q<1$, this is smaller than $\delta_0$ when $T$ is small.  Thus
the twist lemma and all dispersive estimates apply uniformly.

Combining \eqref{mix-Linfty} and
\eqref{mix-Calpha}, using $t^{-d/p}\leq t^{-q}$ for
$0<t\leq T\leq1$, and recalling that $rq<1$, we obtain
\begin{align}
 \norm{\mathcal G[\varrho]}_{L^r_tC^\alpha_x}
 &\leq
 C\big(\norm{G_0}_{L^p}+\norm{G_\theta}_{L^p}\big)
 \left(\int_0^Tt^{-rq}\dd t\right)^{1/r}\nonumber\\
 &\leq
 C\big(\norm{G_0}_{L^p}+\norm{G_\theta}_{L^p}\big)T^{1/r-q}
 \leq R_\alpha.\label{selfmap-holder}
\end{align}
Moreover, positivity of $f_0$, the identity of the flow at $t=0$,
and volume preservation give
\begin{equation*}
 \mathcal G[\varrho]\geq0,\qquad
 \mathcal G[\varrho](0)=\rho_0,\qquad
 \int_{\R^d}\mathcal G[\varrho](t,x)\dd x=M.
\end{equation*}
Together with \eqref{mix-Lp},
\eqref{time-derivative-density}, and \eqref{tail-estimate}, these identities
verify all the other defining conditions of $\mathcal K$.  Hence
$\mathcal G[\mathcal K]\subset\mathcal K$.
The same uniform bounds and tightness modulus, together
with Lemma~\ref{lem:compactness}, show that
$\mathcal G[\mathcal K]$ is relatively compact in $\mathcal X$.

\emph{Continuity.}
Suppose $\varrho_n\to\varrho$ in
$\mathcal X$.   We first record the convergence of the corresponding fields.
Let $O\Subset\R^d$, let $0<\beta'<\beta$, and choose a cutoff $\chi_R$
which equals one on a large ball containing $O$.  For the local part, the
Schauder estimate and the convergence in $\mathcal X$ give
\begin{equation*}
 \nabla\Delta^{-1}\bigl(\chi_R(\varrho_n-\varrho)\bigr)
 \longrightarrow0
 \quad\hbox{in }L^1(0,T;C^{1,\beta'}(O)).
\end{equation*}
For the complementary part, the Newtonian kernel and all its derivatives are
smooth when $x\in O$ and $y$ lies outside the larger ball.  Once $R$ is large
enough relative to $O$, its $C^{1,\beta'}(O)$ norm is bounded uniformly in
$R$ by a constant depending only on $O$, times
\begin{equation*}
 \int_{\abs y>R}\bigl(\varrho_n(t,y)+\varrho(t,y)\bigr)\dd y.
\end{equation*}
The common tightness modulus makes this bound uniformly small as
$R\to\infty$.  Thus, first taking $n\to\infty$ and then $R\to\infty$, we
obtain
 \begin{equation}\label{field-convergence}
 E_{\varrho_n}\longrightarrow E_\varrho
 \quad\hbox{in }L^1(0,T;C^{1,\beta'}(O)).
\end{equation}
The flows therefore converge uniformly on compact subsets of phase space,
uniformly for $t\in[0,T]$.  Indeed, all relevant trajectories remain in a
slightly larger compact set, and Gronwall applied to the difference of the
two characteristic systems uses \eqref{field-convergence} and the uniform
$L^1_tC^1_x$ bound.

If $g\in C_c^\infty(\R^{2d})$, this flow convergence and volume preservation
give
\begin{equation*}
 g\bigl(X_{0,t}^{\varrho_n},V_{0,t}^{\varrho_n}\bigr)
 \longrightarrow
 g\bigl(X_{0,t}^{\varrho},V_{0,t}^{\varrho}\bigr)
 \quad\hbox{in }C\big([0,T];L^1\cap L^p\big).
\end{equation*}
Approximate $f_0$ in $L^1\cap L^p$ by such a $g$.  Since composition with
each flow preserves every $L^a$ norm, $1\leq a<\infty$, the same conclusion
holds with $g$ replaced by $f_0$.  In particular, the transported phase-space
densities converge in distributions.  On the other hand, the family
$\{\mathcal G[\varrho_n]\}$ has the uniform estimates of
Lemma~\ref{lem:mixing} and \eqref{time-derivative-density}; by
Lemma~\ref{lem:compactness}, it is precompact in $\mathcal X$.  Every
convergent subsequence has, by the distributional convergence, the unique
limit $\mathcal G[\varrho]$.  Thus the entire sequence converges to that
limit in $\mathcal X$, proving continuity.
\end{proof}

\begin{proof}[Proof of existence in Theorem~\ref{thm:main}]
Theorem~\ref{thm:Schauder}, applied to
Lemmas~\ref{lem:compactness} and \ref{lem:selfmap}, gives a\ 
$\rho\in\mathcal K$ such that $\mathcal G[\rho]=\rho$.  Let
$E=\sigma\nabla\Delta^{-1}\rho$ and
\begin{equation*}
 f(t,x,v)=f_0\bigl(X_{0,t}(x,v),V_{0,t}(x,v)\bigr).
\end{equation*}
The flow is volume preserving, so $f$ solves the transport equation in
distributions and preserves its $L^1$ and $L^p$ norms.  The fixed-point
identity gives $\rho=\int f\dd v$, hence $(f,E)$ solves \eqref{VP}.
The estimates \eqref{rhoinfty}--\eqref{currentbound} follow from
Lemma~\ref{lem:mixing}, while \eqref{Eclass} follows from the global
Schauder estimate for the Poisson equation.
\end{proof}

\section{Uniqueness}\label{sec:uniqueness}

We use the following consequence of Loeper's field estimate
\cite{Loeper}.

\begin{lemma}[Loeper stability estimate]\label{lem:loeper}
Let $\rho_1,\rho_2\geq0$ be bounded densities with equal finite mass, let
$E_i=\nabla\Delta^{-1}\rho_i$, and let $\pi$ be a coupling of $\rho_1$ and
$\rho_2$ with finite quadratic transportation cost.  Then, with
 $R=\max_{i=1,2}\norm{\rho_i}_{L^\infty}$,
\begin{align}
 \norm{E_1-E_2}_{L^2_x}^2
 &\leq C_d R\int\abs{x-y}^2\dd\pi(x,y),\label{loeper-L2}\\
 \int\abs{E_1(x)-E_2(x)}^2\rho_1(x)\dd x
 &\leq C_d R^2\int\abs{x-y}^2\dd\pi(x,y).
 \label{loeper}
\end{align}
Only this pointwise-in-time estimate is needed; the density norms need not be
uniformly bounded in time.   In particular, no finite second moment of either
density is assumed separately.
\end{lemma}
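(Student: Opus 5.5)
The plan is to reproduce Loeper's argument through the dual (negative Sobolev) characterization of the field difference, and then to handle the weighted version \eqref{loeper} by Hölder's inequality together with a potential estimate. Throughout we take $\sigma=1$ (the sign is irrelevant) and write $\phi=\Delta^{-1}(\rho_1-\rho_2)$, so $E_1-E_2=\nabla\phi$.

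For \eqref{loeper-L2}, the first step is to integrate by parts:
\[
\norm{\nabla\phi}_{L^2}^2=-\int\phi\,(\rho_1-\rho_2)\dd x.
\]
This step needs justification. The mass difference is zero, and $\rho_i\in L^1\cap L^\infty$. Together these give $\nabla\phi\in L^2$ when $d\geq3$; in $d=2$ the zero total charge makes $\nabla\phi$ decay like $|x|^{-2}$, so it is still in $L^2$. I would verify this by truncation with a cutoff $\chi_R$.

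Next, interpolate along the coupling. Set $x_\tau=(1-\tau)y+\tau x$, let $\rho_\tau$ be the push-forward of $\pi$ under $(x,y)\mapsto x_\tau$, and write
\[
\int\phi\,\dd(\rho_1-\rho_2)=\int_0^1\!\!\int\nabla\phi(x_\tau)\cdot(x-y)\dd\pi\dd\tau.
\]
By Cauchy--Schwarz, this is bounded by
\[
\Bigl(\int_0^1\!\!\int|\nabla\phi|^2\dd\rho_\tau\dd\tau\Bigr)^{1/2}\Bigl(\int|x-y|^2\dd\pi\Bigr)^{1/2}.
\]
The key point is that $\norm{\rho_\tau}_{L^\infty}\le R$. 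For optimal (displacement) couplings this is the standard displacement-convexity fact. For a general coupling it can fail, so I would first prove the estimate with $\pi$ replaced by the optimal coupling. That coupling has cost no larger than that of $\pi$, which gives \eqref{loeper-L2} for every $\pi$. Finiteness of the cost of $\pi$ ensures that $W_2(\rho_1,\rho_2)<\infty$ without any separate moment assumption. The bound $\int|\nabla\phi|^2\dd\rho_\tau\le R\norm{\nabla\phi}_{L^2}^2$ then closes the estimate: divide by $\norm{\nabla\phi}_{L^2}$ and square.

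For \eqref{loeper}, I would estimate
\[
\int|E_1-E_2|^2\rho_1\le R\norm{E_1-E_2}_{L^2}^2\le C_dR^2\int|x-y|^2\dd\pi,
\]
which follows immediately from \eqref{loeper-L2}.

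I expect the main obstacle to be the interpolation step, specifically the $L^\infty$ bound on $\rho_\tau$. This requires invoking McCann's displacement convexity of $\int\rho^\gamma$ as $\gamma\to\infty$ for the Brenier map. In $\R^d$ that map exists because $\rho_1$ is absolutely continuous. Secondary care is needed for the integration by parts in $d=2$ and for the reduction from a general coupling to the optimal one.
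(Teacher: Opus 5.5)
Your proposal is correct in its main architecture, but it takes a different route from the paper.

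\textbf{How the paper argues.} The paper never reopens Loeper's proof. It truncates the \emph{given} coupling to $\pi_n=\boldsymbol 1_{B_n\times B_n}\pi$. The marginals $\rho_{i,n}\le\rho_i$ have equal mass and compact support. Loeper's $H^{-1}$ estimate is applied to them as a black box, after normalizing by the common mass. The paper then lets $n\to\infty$, using $\rho_{i,n}\to\rho_i$ in $L^1$ and weak lower semicontinuity of the $L^2$ norm. The weighted bound \eqref{loeper} then follows from $\rho_1\le R$, exactly as in your last step.

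\textbf{How your route differs.} You re-derive Loeper's inequality directly: integration by parts, interpolation along the coupling, and Cauchy--Schwarz with $\norm{\rho_\tau}_{L^\infty}\le R$. You absorb the missing moments into optimal transport theory. This requires four facts without any moment assumption:
\begin{itemize}
\item existence of an optimal plan of finite cost;
\item cyclical monotonicity of that plan;
\item McCann's monotone transport map;
\item the $L^\infty$ bound for McCann's displacement interpolant.
\end{itemize}
All of these are true, so your argument is more self-contained regarding Loeper's inequality. However, it imports moment-free transport theory that the paper's truncation avoids entirely: after truncation, only the compactly supported case of Brenier--McCann and Loeper is needed.

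\textbf{One step is wrong as written.} Your justification of $\nabla\phi\in L^2$, and of the integration by parts, fails for $d=2$. Zero total charge gives $\abs{x}^{-2}$ decay only when a first moment is finite, and the lemma assumes no moments. For example, take
$\rho_1=\sum_k2^{-k}\boldsymbol 1_{B(z_k,1)}$ and $\rho_2=\sum_k2^{-k}\boldsymbol 1_{B(z_k+e,1)}$, with $\abs{z_k}=4^k$ and $\abs e=1$, coupled by translation by $e$. This coupling has finite cost, yet $\abs{\nabla\phi}\approx 2^{-k}\gg\abs{z_k}^{-2}$ on unit-size regions near $z_k$. Likewise, the logarithmic potential of each $\rho_i$ need not exist in $d=2$.

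\textbf{How to repair it.} The conclusion $\nabla\phi\in L^2$ is still true, for instance from
$\abs{\widehat{\rho_1-\rho_2}(\xi)}\le\abs\xi\int\abs{x-y}\dd\pi$. The simplest fix, though, is to run your argument on the truncated coupling $\boldsymbol 1_{B_n\times B_n}\pi$. There compact support justifies the decay and the integration by parts, and Brenier's theorem applies in its standard form. You can then pass to the limit by lower semicontinuity, exactly as the paper does.
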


\begin{proof}
We first remove a minor moment issue  from the usual Wasserstein formulation .
Let
\begin{equation*}
 \pi_n=\boldsymbol 1_{B_n\times B_n}\pi
\end{equation*}
and let $\rho_{1,n},\rho_{2,n}$ be its two marginals.  They have equal finite
mass, compact support, and satisfy $0\leq\rho_{i,n}\leq\rho_i\leq R$.
 Loeper's $H^{-1}$ estimate , applied after normalization  by their common mass ,
gives
\begin{equation*}
 \norm{\nabla\Delta^{-1}(\rho_{1,n}-\rho_{2,n})}_{L^2}^2
 \leq C_dR\int\abs{x-y}^2\dd\pi_n(x,y).
\end{equation*}
The normalization introduces no additional mass factor.  Since
$\rho_{i,n}\to\rho_i$ in $L^1$ and in distributions, the fields on the
left converge distributionally to $E_1-E_2$.  Their uniform $L^2$ bound and
weak lower semicontinuity therefore give  \eqref{loeper-L2}.   Finally,
 $\rho_1\leq R$  implies
\begin{equation*}
 \int\abs{E_1-E_2}^2\rho_1
 \leq R\norm{E_1-E_2}_{L^2}^2,
\end{equation*}
which  proves \eqref{loeper}.
\end{proof}

\begin{proof}[Proof of uniqueness]
Let $f_1,f_2$ be two solutions in the class of Theorem~\ref{thm:main} with
the same initial datum, and let
$\Phi_i=(X_i,V_i)$ be their flows.
 By the definition of a Lagrangian solution, $f_i$ is
transported by $\Phi_i$.  The two flows start from the same point in phase
space, and
\begin{align*}
 |V_i(t,y,w)-w|
 &\leq\int_0^t\norm{E_i(s)}_{L^\infty}\dd s,\\
 |X_i(t,y,w)-y-tw|
 &\leq T\int_0^t\norm{E_i(s)}_{L^\infty}\dd s.
\end{align*}
Thus the differences $X_1-X_2$ and $V_1-V_2$ are bounded independently of
$(y,w)$.  In particular, the following quantity is finite and absolutely
continuous:
 \begin{equation}\label{Pdefinition}
 P(t)=\frac{1}{2}\iint f_0(y,w)
\big(\abs{X_1-X_2}^2+\abs{V_1-V_2}^2\big)\dd y\dd w.
\end{equation}
 Write $D_X=X_1-X_2$ and $D_V=V_1-V_2$.
Differentiating \eqref{Pdefinition} gives, for almost every $t$,
\begin{align*}
 P'(t)
 &=\iint f_0D_X\cdot D_V\dd y\dd w\\
 &\quad+\iint f_0D_V\cdot
 \bigl(E_1(t,X_1)-E_2(t,X_2)\bigr)\dd y\dd w.
\end{align*}
The first integral is bounded in absolute value by $P(t)$.  To
estimate the second one, split the force difference as

\begin{equation*}
 E_1(X_1)-E_2(X_2)
 =\big(E_1(X_1)-E_2(X_1)\big)
  +\big(E_2(X_1)-E_2(X_2)\big).
\end{equation*}
Since the solutions are transported by their flows,
\begin{equation*}
 (X_i(t))_\#(f_0\dd y\dd w)=\rho_i(t,x)\dd x.
\end{equation*}
Consequently, the measure
\begin{equation*}
 \pi_t=(X_1(t),X_2(t))_\#(f_0\dd y\dd w)
\end{equation*}
is a coupling of $\rho_1(t)$ and $\rho_2(t)$.  Its quadratic cost
is finite by the preceding bounds and satisfies
\begin{equation*}
 \int_{\R^d\times\R^d}|x-y|^2\dd\pi_t(x,y)
 =\iint f_0|D_X|^2\dd y\dd w\leq2P(t).
\end{equation*}
Set $R(t)=\max_i\norm{\rho_i(t)}_{L^\infty}$.  Applying
\eqref{loeper} gives
\begin{align*}
 \iint f_0|E_1(X_1)-E_2(X_1)|^2\dd y\dd w
 &=\int_{\R^d}|E_1-E_2|^2\rho_1\dd x\\
 &\leq C_dR(t)^2
 \int_{\R^d\times\R^d}|x-y|^2\dd\pi_t(x,y)\\
 &\leq C_dR(t)^2P(t).
\end{align*}
On the other hand, the Lipschitz bound for $E_2$ yields
\begin{align*}
 \iint f_0|E_2(X_1)-E_2(X_2)|^2\dd y\dd w
 &\leq\norm{\nabla E_2(t)}_{L^\infty}^2
 \iint f_0|D_X|^2\dd y\dd w\\
 &\leq2\norm{\nabla E_2(t)}_{L^\infty}^2P(t).
\end{align*}
Therefore
\begin{align*}
 &\iint f_0|E_1(X_1)-E_2(X_2)|^2\dd y\dd w\\
 &\qquad\leq
 C_d\bigl(R(t)^2+\norm{\nabla E_2(t)}_{L^\infty}^2\bigr)P(t).
\end{align*} 
 Using this estimate and Cauchy--Schwarz in the
differentiated expression for $P$ yields
\begin{equation}\label{Pgronwall}
 P'(t)\leq C_d\Big(1+\max_{i=1,2}\norm{\rho_i(t)}_{L^\infty}
                    +\norm{\nabla E_2(t)}_{L^\infty}\Big)P(t)
\end{equation}
 for almost every $t$.

The coefficient belongs to $L^1(0,T)$.  Indeed,
 $\norm{\rho_i(t)}_{L^\infty}\lesssim t^{-d/p}$  with $d/p<1$, and
$\nabla E_i\in L^1_tL^\infty_x$ by \eqref{Eclass}.  Since $P(0)=0$,
Gronwall's inequality gives $P\equiv0$.  Thus the two flows agree
$f_0$-almost everywhere, and consequently $f_1=f_2$.
\end{proof}

\section{Time continuity and continuous dependence}\label{sec:continuity}

We finish the well-posedness proof by showing that the Lagrangian solution is
continuous in time and depends continuously on the initial datum.

\begin{proof}[Proof of time continuity]
The backward maps $(X_{0,t},V_{0,t})$ converge locally uniformly to
$(X_{0,s},V_{0,s})$ as $t\to s$, because
$E\in L^1_tC^1_x$.  If $g\in C_c^\infty(\R^{2d})$, dominated convergence
and volume preservation give
\begin{equation*}
 \norm{g(X_{0,t},V_{0,t})-g(X_{0,s},V_{0,s})}_{L^1\cap L^p}\to0.
\end{equation*}
Approximate $f_0$ in $L^1\cap L^p$ by $g$ and again use volume preservation.
This proves \eqref{fclass}.
\end{proof}

The same argument yields continuous dependence.  More precisely, suppose
$f_{0,n}\to f_0$ in $L^1\cap L^p$ and, for the corresponding envelopes,
 \begin{equation*}
 \sup_n\bigl(\norm{f_{0,n}}_{L^1}+\norm{G_{0,n}}_{L^p}
                    +\norm{G_{\theta,n}}_{L^p}\bigr)<\infty.
\end{equation*} 
The preceding construction gives a common existence time  $T$.  The estimates
of Lemma~\ref{lem:mixing}, the continuity equation, and
$f_{0,n}\to f_0$ in $L^1$ give uniform spatial tightness of the corresponding
densities $\rho_n$.  Lemma~\ref{lem:compactness}, with the same diagonal
argument when the initial traces vary, therefore shows  that every subsequence
contains a further subsequence  such that
\begin{equation}\label{continuous-dependence-density}
 \rho_n\longrightarrow\bar\rho
 \quad\hbox{in }L^r(0,T;C^\beta_{\rm loc}(\R^d)).
\end{equation}
The initial trace of the limit is $\rho_0=\int f_0\,\dd v$: indeed,
$\rho_{0,n}\to\rho_0$ in $L^1_x$ and the uniform time-derivative bound passes
the trace in distributions.

The local/far-field decomposition used in
\eqref{field-convergence} now gives
\begin{equation*}
 E_n=\sigma\nabla\Delta^{-1}\rho_n
 \longrightarrow
 \bar E=\sigma\nabla\Delta^{-1}\bar\rho
 \quad\hbox{in }L^1(0,T;C^{1,\beta'}(O))
\end{equation*}
for every $O\Subset\R^d$ and $0<\beta'<\beta$.  Hence the corresponding
backward flows converge locally uniformly, uniformly in $t\in[0,T]$.
Let $\bar\Phi_{0,t}$ denote the flow generated by $\bar E$.  For $a=1,p$,
volume preservation and approximation of $f_0$ by compactly supported smooth
functions give
\begin{align*}
 &\sup_{t\leq T}
 \norm{f_{0,n}(\Phi^n_{0,t})-f_0(\bar\Phi_{0,t})}_{L^a}\\
 &\leq \norm{f_{0,n}-f_0}_{L^a}
   +\sup_{t\leq T}\norm{
      f_0(\Phi^n_{0,t})-f_0(\bar\Phi_{0,t})}_{L^a}
      \longrightarrow0.
\end{align*}
It follows both that
$\bar\rho=\int f_0(\bar\Phi_{0,t})\,\dd v$ and that
$\bar E=\sigma\nabla\Delta^{-1}\bar\rho$.  Thus the subsequential limit is  a
solution with  initial  datum $f_0$.  Uniqueness identifies  it with $(f,E)$.
Since every subsequence has the same limit,  the whole sequence converges , and
the preceding estimate becomes
\begin{equation*}
 \sup_{t\leq T}\norm{f_n(t)-f(t)}_{L^1\cap L^p}\longrightarrow0.
\end{equation*} 
 Therefore  the solution map is continuous into
$C([0,T];L^1\cap L^p)$ on subsets with a uniform data bound.

\section{Discussion and open problems}\label{sec:endpoints}

We conclude by comparing the result with the bounded-density theory and by
isolating the endpoint and nonuniqueness questions left open by the argument.

\subsection{Beyond the classical bounded-density regime}

For the two-dimensional Euler equation, Yudovich's theorem uses bounded
vorticity to obtain a log-Lipschitz velocity field and then an Osgood
uniqueness argument.  Beyond this class, Bru\`e and
Colombo \cite{BrueColombo} proved nonuniqueness for weak velocity solutions
whose vorticity belongs to the Lorentz space $L^{1,\infty}$.  More recently,
Bru\`e, Colombo, and Kumar \cite{BrueColomboKumar} constructed nonunique weak
solutions with uniformly bounded kinetic energy and vorticity in
$L^\infty_tL^q_x$ for some $q>1$.  These results concern weak solutions
obtained by convex integration and therefore do not contradict Yudovich's
uniqueness theorem.   The survey \cite{Brue} emphasizes both the robustness
of this mechanism and the difficulty of determining what remains true for
finite-$L^q$ vorticity.  For Vlasov--Poisson, the corresponding classical
reference point is Loeper's bounded-density theorem \cite{Loeper}: a uniform
$L^\infty_x$ bound on the macroscopic density yields a quantitative
Wasserstein stability estimate.  Uniqueness criteria allowing certain
unbounded densities were subsequently obtained in \cite{Uniqueness,Yudovich}.
These criteria do not establish uniqueness from the sole
assumption that the density belongs to one fixed $L^p_x$ space with
$p<\infty$; instead, they require control of $L^q_x$ norms for arbitrarily
large finite $q$, with prescribed growth as $q\to\infty$.

Theorem~\ref{thm:main} lies beyond the classical \emph{uniformly}
bounded-density regime.  Indeed, the assumptions allow
$\rho_0\in L^p_x\setminus L^\infty_x$, and the dispersive estimate gives only
 \begin{equation}\label{time-singular-density}
 \norm{\rho(t)}_{L^\infty}\lesssim t^{-d/p}\norm{G_0}_{L^p}.
\end{equation} 
Thus $\rho$ need not belong to $L^\infty(0,T;L^\infty_x)$.  The condition
$p>d$ nevertheless makes the singularity in
\eqref{time-singular-density} integrable, while fractional velocity
regularity yields $E\in L^1(0,T;C^{1,\alpha}_x)$.  Loeper's estimate can then
be applied at each positive time and integrated with a time-dependent
coefficient, as in Section~\ref{sec:uniqueness}.  In this precise sense, the
result passes beyond the classical bounded-density uniqueness class.  We do
not mean that it contains every known Yudovich- or Orlicz-type extension;
the mechanisms and assumptions of those theories are different.

\subsection{The endpoints \texorpdfstring{$p=d$ and $\theta=0$}{p equal to d and theta equal to zero}}

The decisive singularity in the present construction is \eqref{rhoholder}:
\begin{equation*}
 [\rho(t)]_{C^\alpha}\lesssim t^{-d/p-\alpha}.
\end{equation*}
It is time integrable exactly when $d/p+\alpha<1$.  We also need
$\alpha<\theta$ in the fractional velocity estimate.  Hence an admissible
positive $\alpha$ exists under the two strict conditions
\begin{equation*}
 p>d,\qquad \theta>0.
\end{equation*}
For every $p>d$ and every arbitrarily small $\theta>0$, one may choose
\begin{equation*}
 0<\alpha<\min\left\{\theta,1-\frac dp\right\}.
\end{equation*}

At $p=d$, even the $L^\infty$ estimate behaves like $t^{-1}$, so the
time-integrability mechanism used here becomes critical.  At $\theta=0$, the
assumptions provide no positive H\"older modulus for the density, even for
free transport.  Nevertheless, when $p>d$, estimate
\eqref{time-singular-density} formally gives
$\rho\in L^1(0,T;L^\infty_x)$ and hence a time-integrable log-Lipschitz
electric field.  This points toward an Osgood or regular-Lagrangian-flow
framework, but the compact construction and stability of the nonlinear
velocity-to-position projection are presently missing.

\begin{question}[The endpoint $\theta=0$]
\label{op:endpoints}
For $p>d$, does Theorem~\ref{thm:main} remain valid under
the sole weighted envelope condition $G_0\in L^p_x$, with $\theta=0$ and no
positive velocity modulus?  A proof would have to propagate the dispersive
$L^1_tL^\infty_x$ density estimate through a log-Lipschitz or regular
Lagrangian flow without using pointwise $C^1$ Jacobian control for the
characteristic reparametrization.
\end{question}
The restrictions $p>d$ and $\theta>0$ should therefore be understood as
sharp for the present H\"older-flow argument, not as established thresholds
for well-posedness itself.

\subsection{A nonuniqueness problem for Vlasov--Poisson}

The developments for two-dimensional Euler surveyed in \cite{Brue} show
that failure of the classical Yudovich hypothesis can lead, in sufficiently
weak classes, to instability and nonuniqueness.  In particular, the survey
records nonunique weak Euler solutions with vorticity in $C_tL^q_x$ for
$1<q<1+1/6500$, while the general finite-$q$ question remains open.  This
analogy is suggestive for Vlasov--Poisson but should not be pushed too far.
The kinetic equation
has a phase-space transport structure, positivity of $f$, an elliptic
self-consistency constraint, and a second-order characteristic system.  All
of these features impose restrictions absent from the vorticity formulation
of two-dimensional Euler.

To the best of our knowledge, no nonuniqueness construction is known for the
Vlasov--Poisson Cauchy problem in a natural class of nonnegative, finite-mass
weak solutions  on the whole space  with finite energy or comparable velocity
moments.   Nonuniqueness is known in a different, substantially more singular
setting: Binshati and Tudorascu \cite{BinshatiTudorascu} treat a generalized
one-dimensional periodic formulation for probability-measure solutions,
with the force defined through a barycentric projection.  That result does
not address the standard whole-space finite-density class considered here.
 This leads to the following open problem.

\medskip
\noindent\textbf{Open problem (nonuniqueness below the density-stability
threshold).}
Does there exist a nonnegative finite-mass datum $f_0$ for which the
Vlasov--Poisson system admits two distinct distributional solutions with the
same initial datum, satisfying the natural energy and moment bounds, but
whose macroscopic densities lie outside the bounded, Yudovich, and related
stability classes of \cite{Loeper,Uniqueness,Yudovich}?  Can such
nonuniqueness occur among renormalized or regular-Lagrangian solutions, or is
it confined to a still weaker distributional class?

Any positive answer would have to evade the Wasserstein stability mechanism
used in Section~\ref{sec:uniqueness}.  Conversely, a negative answer would
point to a genuinely kinetic rigidity principle extending uniqueness far
beyond all presently known density criteria.  Determining which alternative
holds is, in our view, a fundamental open direction.

\end{document}